\documentclass{article}
\usepackage[margin=2.5cm]{geometry}

\usepackage{graphicx}
\usepackage{amsmath,amssymb,commath,mathtools,mathrsfs}
\usepackage{amsthm,authblk}
\usepackage{wasysym}
\usepackage{color}
\usepackage{bm}
\usepackage{algorithm, algpseudocode, algorithmicx} %
\algrenewcommand{\algorithmiccomment}[2][.25\linewidth]{%
  \leavevmode\hfill\makebox[#1][l]{$\triangleright$~#2}} 
\usepackage{caption}
\usepackage{subcaption}
\captionsetup{compatibility=false}

\usepackage{paralist}

\usepackage{booktabs,multirow}

\usepackage{natbib}
 \bibpunct[, ]{[}{]}{,}{n}{}{,}%

\usepackage{xcolor}


\usepackage[colorlinks=true,breaklinks=true,bookmarks=true,urlcolor=blue,
     citecolor=blue,linkcolor=blue,bookmarksopen=false,draft=false]{hyperref}




\DeclareMathOperator*{\argmax}{arg\,max}
\DeclareMathOperator*{\argmin}{arg\,min}

\newcommand{\nVert}[1]{\lVert#1\rVert}
\newcommand{\innerprod}[2]{\left\langle#1,#2\right\rangle}
\newcommand{\bangle}[2]{\langle#1,#2\rangle}
\newcommand{\transpose}{\mathsf{T}}

\newcommand{\conv}{\mathrm{conv}}

\newcommand{\Reg}{\mathrm{R}}

\newcommand{\Vol}{\mathrm{Vol}}

\makeatletter
\def\ubar#1{\underline{\sbox\tw@{$#1$}\dp\tw@\z@\box\tw@}}
\makeatother

\makeatletter
\def\defcal#1{%
\expandafter\newcommand\csname cal#1\endcsname{\mathcal{#1}}}
\count@=0
\loop
\advance\count@ 1
\edef\y{\@Alph\count@}%
\expandafter\defcal\y
\ifnum\count@<26
\repeat
\def\defscr#1{%
\expandafter\newcommand\csname scr#1\endcsname{\mathscr{#1}}}
\count@=0
\loop
\advance\count@ 1
\edef\y{\@Alph\count@}%
\expandafter\defscr\y
\ifnum\count@<26
\repeat
\def\defbb#1{%
\expandafter\newcommand\csname bb#1\endcsname{\mathbb{#1}}}
\count@=0
\loop
\advance\count@ 1
\edef\y{\@Alph\count@}%
\expandafter\defbb\y
\ifnum\count@<26
\repeat
\makeatother

\renewcommand{\phi}{\varphi}
\renewcommand{\epsilon}{\varepsilon}

\newcommand{\ulF}{\underline{F}}
\newcommand{\olF}{\overline{F}}
\newcommand{\ulcQ}{\underline{\calQ}}
\newcommand{\olcQ}{\overline{\calQ}}

\newcommand{\UB}{\text{\textsc{UpperBound}}}
\newcommand{\LB}{\text{\textsc{LowerBound}}}
\newcommand{\NumEval}{\textup{\texttt{\#Eval}}}
\newcommand{\CDDP}{\textup{CDDP}}
\newcommand{\NDDP}{\textup{NDDP}}

\newtheorem{theorem}{Theorem}
\newtheorem{lemma}{Lemma}
\newtheorem{proposition}{Proposition}
\newtheorem{corollary}{Corollary}

\newtheorem{remark}{Remark}
\newtheorem{question}{Question}
\newtheorem{example}{Example}
\newtheorem{definition}{Definition}

\begin{document}

\title{On Distributionally Robust Multistage Convex Optimization: New Algorithms and Complexity Analysis}
\author[1]{Shixuan Zhang}
\author[2]{Xu Andy Sun}
\affil[1]{Wm Michael Barnes ‘64 Department of Industrial \& Systems Engineering, Texas A\&M University, College Station, TX 77843, USA \quad shixuan.zhang@tamu.edu}           
\affil[2]{Operations Research and Statistics, Sloan School of Management, Massachusetts Institute of Technology, Cambridge, MA 02139, \quad sunx@mit.edu}
\renewcommand\Affilfont{\itshape\small}
\date{}
\maketitle

\begin{abstract}
    This paper presents an algorithmic study and complexity analysis for solving distributionally robust multistage convex optimization (DR-MCO) problems. 
    Our main contribution is a novel \emph{nonconsecutive} dual dynamic programming (NDDP) algorithm which explores different stages in an adaptive fashion.
    In contrast with the usual consecutive dual dynamic programming (CDDP) algorithm, we show that NDDP reduces the subproblem complexity from quadratic to linear dependency on the number of stages.
    Two different DR-MCO examples are also presented to show the efficiency and effectiveness of the proposed NDDP algorithm.
    \\
    \textbf{Keywords:}  
    distributionally robust convex optimization, 
    multistage optimization,
    dual dynamic programming algorithm,
    complexity analysis,
    cutting plane method
\end{abstract}

\maketitle

\section{Introduction}
\label{sec:Introduction}
Distributionally robust multistage convex optimization (DR-MCO) is a sequential decision making problem with convex objective functions and constraints, where the exact probability distribution of the uncertain parameters is unknown and decisions need to be made considering a family of distributions. 
DR-MCO provides a unified framework for studying decision-making under uncertainty. 
It includes as special cases both multistage stochastic convex optimization (MSCO), where some distribution of the uncertain parameters is known and the expected cost is to be minimized, and multistage robust convex optimization (MRCO), where the uncertainty is described by a set and the worst-case cost is to be minimized.  
DR-MCO as a general decision framework finds ubiquitous applications in energy system planning, supply chain and inventory control, and other areas (see~e.g. \cite{shapiro2009lectures,bental2009robust}).

Both MSCO and MRCO are in general challenging to solve, due to the fast growth of the number of decisions with respect to the number of decision stages \cite{shapiro_complexity_2006,dyer_computational_2006,georghiou_robust_2019}. 
Meanwhile, real-world problems are often endowed with special structures in the uncertainty. In particular, the uncertainty may exhibit stagewise independence (SI), i.e., the uncertainty in different stages are independent from each other. Many uncertainty structures, such as autoregressive stochastic models, can be reformulated to satisfy SI~\cite{shapiro2012final}. This versatile modeling capability of SI has great implications on computation. It allows recursive formulations of cost-to-go functions in each stage to be independent of the outcomes in its previous stages, thus making efficient approximations of the cost-to-go functions possible. Indeed, SI has been successfully exploited by various algorithms in solving MSCO and MRCO~\cite{shapiro_analysis_2011,philpott_convergence_2008,georghiou_robust_2019,philpott_midas_2016,baucke_deterministic_nodate,baucke_deterministic_nodate-1,ahmed2019stochastic,zou_stochastic_2019,shapiro_stationary_nodate}.

Dual dynamic programming (DDP) is a prevalent class of recursive cutting plane algorithms for MSCO and MRCO that effectively exploits the SI structure.
The earliest form of DDP originates from nested Benders decomposition with stochastic sampling steps~\cite{birge_decomposition_1985,pereira_stochastic_1985,pereira_multi-stage_1991}.
Thus it was known as stochastic dual dynamic programming (SDDP) and has been widely adopted in energy system planning~\cite{flach_long-term_2010,de_matos_improving_2015,shapiro_risk_2013}.
The deterministic version of DDP was later proposed, which uses both over- and under-approximations for sampling and termination~\cite{baucke_deterministic_nodate,baucke_deterministic_nodate-1}.
Meanwhile, robust dual dynamic programming (RDDP) is developed for multistage robust linear optimization~\cite{georghiou_robust_2019}.
Similar to the deterministic DDP, RDDP constructs deterministic approximations to select the worst-case outcome, and thus has guaranteed optimality, in contrast to decision rule-based methods~\cite{kuhn2011primal,bertsimas2015design,hadjiyiannis2011scenario}. 
Recently, DDP has been further extended to some DR-MCO models with statistical or heuristic stopping criteria~\cite{anderson_improving_2019,huang_study_2017,philpott_distributionally_2018,duque2019distributionally}.

The convergence analysis of DDP begins with multistage linear optimization~\cite{philpott_convergence_2008,shapiro_analysis_2011,linowsky_convergence_2005,chen_convergent_1999}, where an almost sure finite convergence is established based on polyhedral structures.
In~\cite{girardeau_convergence_2015}, an asymptotic convergence is proved for MSCO problems.
Due to the multistage structure, a main complexity question concerning DDP is the dependency of its iteration complexity on the number of decision stages, which is recently answered for MSCO, independently in~\cite{lan2022complexity,zhang2022stochastic}. 
However, it is not yet known whether the DDP algorithm, together with the complexity analysis, works for DR-MCO. 

One major issue of extending the algorithm and the analysis from MSCO to DR-MCO is the lack of proper termination criterion.
Due to the distributional uncertainty in the model, the commonly used statistical upper bound for the policy evaluation in the MSCO literature (e.g., \cite{shapiro_analysis_2011,zou_stochastic_2019}) is no longer valid for the DR-MCO problems.
As a result, computational experiments in~\cite{philpott_distributionally_2018} and \cite{duque2019distributionally} terminate at a fixed number of iterations or cuts.
To overcome the lack of statistical upper bound, we follow the idea of deterministic upper bounds~\cite{philpott_solving_2013,baucke_deterministic_nodate} and focus on dynamically constructing both under- and over-approximations to certify the optimality of the solutions, similar to the RDDP algorithm~\cite{georghiou_robust_2019}.
Such deterministic termination criteria and approximation updates demand the study of \emph{subproblem complexity} of the DDP algorithm.
\begin{question}
How many single-stage subproblems do we need to solve for approximation updates before reaching a satisfying optimality gap for the DR-MCO?
\end{question}

For MSCO, existing analysis~\cite{lan2022complexity,zhang2022stochastic} predicts an iteration complexity bound of $\calO(T)$, where in each iteration at least $T$ single-stage subproblems need to be solved, leading to an $\calO(T^2)$ subproblem complexity bound for MSCO.
Our paper answers this complexity question, not only by extending the above MSCO analysis to DR-MCO, but more importantly, by proposing a new \emph{nonconsecutive} DDP (NDDP) that reduces $\calO(T^2)$ to the optimal $\calO(T)$ dependency on the number of stages $T$.
This complexity result resolves an open conjecture in~\cite{georghiou_robust_2019}, on the efficiency of nonconsecutive implementation on MRCO problem, in the more general DR-MCO setting.
Finally, upon the completion of this paper, a work of similar spirit~\cite{ju2023dual} studies the complexity of finite-horizon approximations of a stationary infinite-horizon MSCO problem, where they also successfully show a linear dependency on the approximation horizon length.
We remark that our analysis is original and remains the only subproblem complexity result that works for non-stationary finite-horizon DR-MCO problems to date.

The rest of the paper is organized as follows.
Section~\ref{sec:Formulations} contains the formulations of the problems and the discussion on a technical challenge of bounding the dual variables in DDP recursions.
In Section~\ref{sec:Algorithms}, we define single stage subproblem oracles, review the natural extension of CDDP to DR-MCO, and then present the new NDDP algorithm with improved subproblem oracle complexity bounds.
In Section~\ref{sec:Numerical}, we present two classes of numerical examples that demonstrate the effectiveness of the algorithms. 
Concluding remarks are made in Section~\ref{sec:Conclusion}.

\section{Formulations and Recursive Approximation}
\label{sec:Formulations}
In this section, we introduce the formulation and some basic properties of our distributionally robust multistage convex optimization (DR-MCO) model.
We then discuss the dual-bounding technique and its exactness needed for our complexity analysis in Section~\ref{sec:Algorithms}.

\subsection{Problem Formulations}
\label{subsec:ProblemFormulations}
We present our definition of DR-MCO under the assumption of stagewise independence (SI).
The SI assumption is necessary for the algorithmic development in this paper:
while more general settings can be found in~\cite{shapiro2009lectures}, the SI assumption allows us to avoid the exponential growth of problem sizes with respect to the number of stages, as analyzed in~\cite{shapiro_complexity_2006,reaiche2016note} for MSCO problems.
To illustrate the practicality of our DR-MCO model, we show in Section~\ref{sec:FiniteUncertaintySet} that most MSCO and MRCO in the existing literature (such as~\cite{girardeau_convergence_2015} and ~\cite{georghiou_robust_2019}) can be encompassed within this DR-MCO framework even with only finitely many uncertainty outcomes in each stage.

Let \(\calT\coloneqq\{1,2,\dots,T\}\) denote the set of stage indices and \(\calT'\coloneqq\calT\setminus\{1\}\). 
For each \(t\in\calT\), the decision variable in stage \(t\) is denoted as \(x_t\) and constrained in a compact convex set \(\calX_t\subset\bbR^{d_t}\) with dimension \(d_t\in\bbZ_{\ge0}\).
The uncertainty in stage \(t\) is modeled as a random vector \(\xi_t\) with some possibly unknown probability distribution and its convex support set denoted as \(\Xi_t\).
For simplicity, we use \(x_0\) and \(\xi_1\) to denote deterministic parameters, known as the initial conditions.
The cost incurred by the decisions and the uncertainty outcomes in stage \(t\) is modeled by a nonnegative lower semicontinuous function \(f_t(x_{t-1},x_t;\xi_t)\), that is convex in \((x_{t-1},x_t)\) for each \(\xi_t\in\Xi_t\) and allowed to take \(+\infty\) for infeasibility.
We show by the following example that the usual constraints can be modeled as part of the cost functions \(f_t\).
\begin{example}\label{ex:ConstrainedOptimization}
Let \(\calY_t\) denote a compact convex decision set of auxiliary variables \(y_t\) in stage \(t\in\calT\).
Given a nonnegative, convex, continuous real-valued cost function \(f_t^{<\infty}(x_{t-1},y_t,x_t;\xi_t)\) and any convex continuous functional feasibility constraints \(g_t(x_{t-1},y_t,x_t;\xi_t)\le 0\), we can define \(\calF_t:=\{(x_{t-1},x_t,\xi_t)\in\calX_{t-1}\times\calX_t\times\Xi_t:\exists\,y_t\in\calY_t\text{ s.t. }g_t(x_{t-1},y_t,x_t,\xi_t)\le 0\}\) and set the extended real-valued cost function as 
\[
    f_t(x_{t-1},x_t;\xi_t)=\begin{cases}
        \min_{y_t\in\calY_t}f_t^{<\infty}(x_{t-1},y_t,x_t;\xi_t),&\text{ if }(x_{t-1},x_t,\xi_t)\in\calF_t,\\
        +\infty,&\text{ otherwise}.
    \end{cases}
\]
It can be checked that the cost function \(f_t\) in this example is indeed nonnegative, lower semicontinuous, convex in \((x_{t-1},x_t)\), and proper if \(\calF_t\neq\varnothing\).
\end{example}

Let \(\calP_t\) denote a set of Borel probability measures supported on the uncertainty set \(\Xi_t\), called the ambiguity set, for \(t\in\calT'\).
Our DR-MCO is defined as
\begin{align}\label{eq:DR-MSCP-Def}
    \min_{x_1\in\calX_1}\quad f_1(x_0,x_1;\xi_1)\ +&\sup_{p_2\in\calP_2}\bbE_{\xi_2\sim p_2} \min_{x_2\in\calX_2}\Bigg[f_2(x_1,x_2;\xi_2) + \\
    &+\sup_{p_3\in\calP_3}\bbE_{\xi_3\sim p_3} \min_{x_3\in\calX_3}\bigg[f_3(x_2,x_3;\xi_3)+\cdots\notag\\
    &+\sup_{p_T\in\calP_{T}}\bbE_{\xi_T\sim p_T} \min_{x_T\in\calX_T}f_T(x_{T-1},x_T;\xi_T)\bigg]\Bigg].\notag
\end{align}
Here, each expectation \(\bbE_{\xi_t\sim p_t}\) is taken with respect to some given probability measure \(p_t\in\calP_t\).
Our problem~\eqref{eq:DR-MSCP-Def} relies on the SI assumption in the sense that the ambiguity sets and the expectations are independent across stages.
Consequently, we can define the (worst-case expected) cost-to-go functions recursively from \(t=T\) to \(t=2\) as:
\begin{equation}\label{eq:DR-MSCP-Recursion}
    \calQ_{t-1}(x_{t-1})\coloneqq\sup_{p_t\in\calP_t}\bbE_{\xi_t\sim p_t}\min_{x_t\in\calX_t}f_t(x_{t-1},x_{t};\xi_{t})+\calQ_{t}(x_{t}),
\end{equation}
with \(\calQ_T(x_T)\equiv0\) for any \(x_T\in\calX_T\).
To simplify the notation, we also define 
\begin{equation}\label{eq:DR-MSCP-ValueFunction}
    Q_t(x_{t-1};\xi_t)\coloneqq\min_{x_t\in\calX_t}f_t(x_{t-1},x_t;\xi_t)+\calQ_t(x_t)
\end{equation}
as the value function,
such that \(\calQ_{t-1}(x_{t-1})=\sup_{p_t\in\calP_t}\bbE_{\xi_t\sim p_t} Q_t(x_{t-1};\xi_t)\) for all \(t\in\calT'\).
Our algorithmic goal in this paper is to find a near-optimal first stage solution
\begin{equation}
    x_1^*\in\argmin_{x_1\in\calX_1} f_1(x_0,x_1;\xi_1)+\calQ_1(x_1),
\end{equation}
as a multistage solution may already take exponentially many operations in \(T\) to be written down, e.g., \(2^{T-1}\) if we have an MSCO with \(\abs{\Xi_t}=2\) for \(t=2,\dots,T\).
We say that \(x_1^*\) is an \(\epsilon\)-optimal first stage solution if 
\begin{equation}\label{eq:DR-MSCP-NearOptimalSolution}
    f_1(x_0,x_1^*;\xi_1)+Q_1(x_1^*)\le \varepsilon+\Big(\min_{x_1\in\calX_1}f_1(x_0,x_1;\xi_1)+\calQ_1(x_1)\Big).
\end{equation}

It is well-known that if the ambiguity set \(\calP_t\) is a singleton set, then the supremum is superficial so the DR-MCO~\eqref{eq:DR-MSCP-Def} reduces to an MSCO.
If the ambiguity set \(\calP_t\) contains all Dirac atomic measures (i.e., measures \(\delta_{\hat{\xi}}\) for all \(\hat{\xi}\in\Xi_t\) such that \(\bbE_{\xi_t\sim\delta_{\hat{\xi}}}g(\xi)=g(\hat{\xi})\) for any Borel measurable function \(g\)), then the supremum 
\[
    \sup_{p_t\in\calP_t}\bbE_{\xi_t\sim p_t} Q_t(x_{t-1};\xi_t)=\sup_{\xi_t\in\Xi_t} Q_t(x_{t-1};\xi_t)
\]
and thus the DR-MCO~\eqref{eq:DR-MSCP-Def} reduces to an MRCO.
The following proposition checks that the minimization problems in DR-MCO~\eqref{eq:DR-MSCP-Def} are convex and attained.

\begin{proposition}\label{prop:RecursionConvexity}
    In each stage \(t\in\calT\), the value function \(Q_t(x_{t-1};\xi_t)\) and the cost-to-go function \(\calQ_t(x_t)\) are both lower semicontinuous (lsc) and convex.
\end{proposition}
\begin{proof}
    We prove by recursion from \(t=T\) to \(t=1\).
    By definition, \(\calQ_T(x_T)\equiv0\) is lsc and convex.
    Now assume \(\calQ_t\) is lsc and convex for some \(t\in\calT\).
    Then the sum \(f_t(x_{t-1},x_t;\xi_t)+\calQ_t(x_t)\) is also lsc and convex in \((x_{t-1},x_t)\) for any \(\xi_t\in\Xi_t\).
    Since \(\calX_t\) is compact, we have \(Q_t(x_{t-1};\xi_t):=\min_{x_t\in\calX_t} f_t(x_{t-1},x_t;\xi_t)+\calQ_t(x_t)\) is lsc (see e.g., Lemma 1.30 in~\cite{bauschke2011convex}) and convex.
    Now fix any Borel probability measure \(p_t\in\calP_t\) and take any sequence \(\{x^i\}\subset\calX_{t-1}\) with \(\lim_{i\to\infty}x^i=x_{t-1}\).
    Note that \(Q_t\) is nonnegative by definition, so by Fatou's lemma (see e.g., Lemma 1.28 in~\cite{rudin1987real}) we have
    \[
        \liminf_{i\to\infty}\bbE_{\xi_t\sim p_t} Q_t(x^i;\xi_t)
        \ge\int_{\Xi_t}\liminf_{i\to\infty}Q_t(x^i;\xi_t)\dif p_t(\xi_t)
        \ge\int_{\Xi_t}Q_t(x_{t-1};\xi_t)\dif p_t(\xi_t).
    \]
    The expectation and the integrals are well-defined since \(Q_t\) is lsc, hence Borel measurable.
    This inequality shows that the function \(\bbE_{\xi_t\sim p_t}Q_t(x_{t-1};\xi_t)\) is lsc in \(x_{t-1}\).
    It is also convex in \(x_{t-1}\) by the linearity and monotonicity of expectations (see e.g., Theorem 7.46 in~\cite{shapiro2009lectures}).
    Finally, the epigraph of \(\calQ_{t-1}(\cdot)\) is the intersection of epigraphs of \(\bbE_{\xi_t\sim p_t}Q_t(\cdot;\xi_t)\) for all \(p_t\in\calP_t\), which shows that \(\calQ_{t-1}(\cdot)\) is lsc and convex.
\end{proof}
The lower semicontinuity of the value functions \(Q_t\) implies their Borel measurability, which ensures that the expectations in~\eqref{eq:DR-MSCP-Def} are well-defined.

\subsubsection{Finite Uncertainty Sets}
\label{sec:FiniteUncertaintySet}

If the uncertainty set \(\Xi_t=\{\hat{\xi}_{t,1},\dots,\hat{\xi}_{t,n_t}\}\) is finite, then the ambiguity set is a subset of a \(n_t\)-dimensional simplex \(\calP_t\subseteq\Delta^{n_t}:=\{p_t\in\bbR^{n_t}_{\ge0}:\sum_{k=1}^{n_t}p_{t,k}=1\}\).
In this case, the expectations in the definitions~\eqref{eq:DR-MSCP-Def} and~\eqref{eq:DR-MSCP-Recursion} become finite weighted summations.
For MSCO, such $\Xi_t$ often arise from sample average approximations~\cite{shapiro_analysis_2011}.
It turns out that for MRCO, we can also restrict our attention to a finite support set when the problem is polyhedral (cf.~\cite{georghiou_robust_2019}).

\begin{proposition}\label{prop:ReformulationRobustProgram}
    Let \(\calQ_t\) denote the cost-to-go functions of an MRCO, i.e., a DR-MCO~\eqref{eq:DR-MSCP-Def} with all Dirac atomic measures included in \(\calP_t\) for all \(t\in\calT'\).
    If the cost functions \(f_t\) are jointly convex in \((x_t,\xi_t)\) for each \(x_{t-1}\in\calX_{t-1}\) and the uncertainty sets \(\Xi_t\) are polytopes for all \(t\in\calT'\), then 
    \[
        \calQ_{t-1}(x_{t-1})=\max_{p_t\in\Delta^{n_t}}\sum_{k=1}^{n_t}p_{t,k}\cdot Q_t(x_{t-1};\hat{\xi}_{t,k}),
    \]
    where \(\{\hat{\xi}_{t,1},\dots,\hat{\xi}_{t,n_t}\}\) is the finite set of extreme points of \(\Xi_{t}\).
\end{proposition}
\begin{proof}
    By Proposition~\ref{prop:RecursionConvexity}, each worst-case cost-to-go function \(\calQ_t\) is convex for any \(t\in\calT\).
    From the assumption, the value function \(Q_t(x_{t-1};\xi_t)\) is thus also convex in \(\xi_t\) for each \(x_{t-1}\in\calX_{t-1}\).
    Consequently, we have
    \[
        \calQ_{t-1}(x_{t-1})=\sup_{\xi_t\in\Xi_t}Q_t(x_{t-1};\xi_t)=\max_{p_t\in\Delta^{n_t}}\sum_{k=1}^{n_t}p_{t,k}Q_t(x_{t-1};\hat{\xi}_{t,k}),
    \]
    due to the linearity of the maximization.
    We point out that the above equation holds even when \(\sup_{\xi_t\in\Xi_t}Q_t(x_{t-1};\xi_t)=+\infty\) because in this case we must have \(Q_t(x_{t-1};\hat{\xi}_{t,k})=+\infty\) for some \(k=1,\dots,n_t\) by its convexity.
\end{proof}

In view of Proposition~\ref{prop:ReformulationRobustProgram}, we decide to focus on finite uncertainty sets for numerical implementations in this paper (in Sections~\ref{subsec:SSSOImplementation} and~\ref{sec:Numerical}) for simplicity.
We remark that the proposed algorithms and the complexity bounds are intended for more general DR-MCO problems possibly with infinite uncertainty sets.
These include, for instance, DR-MCO problems with Wasserstein ambiguity sets, which are discussed in our upcoming paper~\cite{zhang2022distributionally}.

\subsection{Approximation of Recursions}
\label{subsec:RecursionApproximation}

We now discuss the approximation of functions \(\calQ_{t-1}\) and \(Q_t\) in the recursions~\eqref{eq:DR-MSCP-Recursion} and \eqref{eq:DR-MSCP-ValueFunction}.
The following lemma relates the Lipschitz continuity of the value functions \(Q_t(\cdot;\xi_t)\) for \(\xi_t\in\Xi_t\) and that of the cost-to-go function \(\calQ_{t}\).
\begin{lemma}\ \label{lemma:RecursionLipschitzContinuity}
    If \(Q_t(\cdot;\xi_t)\) is \(L_t\)-Lipschitz continuous on \(\calX_t\) for any \(\xi_t\in\Xi_t\) and for some \(L_t>0\), then so is \(\calQ_{t}(\cdot)\). 
\end{lemma}
\begin{proof}
    Take any two points \(x^1,x^2\in\calX_{t}\).
    For any \(\epsilon>0\), take \(p_t^1\in\calP_t\) such that
    \[
        \calQ_{t-1}(x^1)-\epsilon \le \bbE_{\xi_t\sim p_t^{1}} Q_t(x^1;\xi_t)\le \calQ_{t-1}(x^1).
    \]
    Then we have
    \begin{align*}
        \calQ_{t-1}(x^1)-\calQ_{t-1}(x^2)
        &\le\epsilon+\bbE_{\xi_t\sim p_t^1}Q_t(x^1;\xi_t)-\sup_{p_t\in\calP_t}\bbE_{\xi_t\sim p_t}Q_t(x^2;\xi_t)\\
        &\le\epsilon+\bbE_{\xi_t\sim p_t^1}\Bigl[Q_t(x^1;\xi_t)-Q_t(x^2;\xi_t)\Bigr]\\
        &\le\epsilon+\bbE_{\xi_t\sim p_t^1}L_t\nVert{x^1-x^2}
        =\epsilon+L_t\nVert{x^1-x^2}.
    \end{align*}
    Since \(\epsilon>0\) is arbitrary, we have \(\calQ_{t-1}(x^1)-\calQ_{t-1}(x^2)\le L_t\nVert{x^1-x^2}\).
    The proof is then completed by exchanging \(x^1\), \(x^2\) and repeating the argument.
\end{proof}

Combining Lemma~\ref{lemma:RecursionLipschitzContinuity} and Proposition~\ref{prop:RecursionConvexity}, we know that  if the value functions are convex and Lipschitz continuous, then so are the cost-to-go functions.
In such a case, we can use cutting plane methods to build an under-approximation of the cost-to-go functions.
To be precise, for every \(\xi_t\in\Xi_t\), let \(V_t(\cdot;\xi_t)\) denote an affine function such that \(Q_t(x_{t-1};\xi_t)\ge V_t(x_{t-1};\xi_t)\) for all \(x_{t-1}\in\calX_{t-1}\).
Such an affine function is referred to as a linear cut for the value function, which is generated as follows.
Let \(\ulcQ_{t}\) denote an under-approximation of the cost-to-go function \(\calQ_{t}\) and \(\hat{x}_{t-1}\in\calX_{t-1}\) a feasible state.
For any fixed \(\xi_t\in\Xi_t\), 
the Lagrangian dual of the minimization problem~\eqref{eq:DR-MSCP-ValueFunction} at the state \(x_{t-1}=\hat{x}_{t-1}\) can be written as
\begin{equation}\label{eq:RecursionLagrangianDual}
    \sup_{\lambda_t\in\bbR^{d_{t-1}}}\inf_{\substack{x_t\in\calX_{t},\\z_t\in\bbR^{d_{t-1}}}}\quad f_t(z_t,x_t;\xi_t)+\ulcQ_{t}(x_t)+\bangle{\lambda_t}{\hat{x}_{t-1}-z_t}
\end{equation}
gives an affine function \(V_t(x_{t-1};\xi_t)\coloneqq \ubar{v}_t(\xi_t)+\bangle{\hat\lambda_t(\xi_t)}{x_{t-1}-\hat{x}_{t-1}}\), where we assume that an optimal dual solution \(\hat\lambda_t(\xi_t)\) of \eqref{eq:RecursionLagrangianDual} and the associated infimum value \(\ubar{v}_t(\xi_t)>-\infty\) are found. 
Then, by definition~\eqref{eq:DR-MSCP-ValueFunction} and weak duality, for every \(x_{t-1}\in\calX_{t-1}\) 
\begin{align}
    &Q_t(x_{t-1};\xi_t)\ge\sup_{\lambda_t\in\bbR^{d_{t-1}}}\inf_{\substack{x_t\in\calX_t,\\z_t\in\bbR^{d_{t-1}}}}f_t(z_t,x_t;\xi_t)+\calQ_{t}(x_t)+\bangle{\lambda_t}{x_{t-1}-z_t}\label{eq:LinearCutValidness}\\
    &\ge\inf_{\substack{x_t\in\calX_t,\\z_t\in\bbR^{d_{t-1}}}}f_t(z_t,x_t;\xi_t)+\ulcQ_{t}(x_t)+\bangle{\hat{\lambda}_t(\xi_t)}{\hat{x}_{t-1}-z_t}+\bangle{\hat{\lambda}_t(\xi_t)}{x_{t-1}-\hat{x}_{t-1}}\notag\\
    &=\ \ubar{v}_t(\xi_t)+\bangle{\hat{\lambda}_t(\xi_t)}{x_{t-1}-\hat{x}_{t-1}}.\notag
\end{align}
Therefore, \(V_t(\cdot;\xi_t)\) is a valid linear cut for the value function \(Q_t(\cdot;\xi_t)\).
Finally, we can aggregate the valid linear cuts to get an affine function
\begin{equation}\label{eq:AggregateLinearCut}
    \calV_{t-1}(x_{t-1})\coloneqq\bbE_{\xi_t\sim \hat{p}_t}V_t(x_{t-1};\xi_t),
\end{equation}
where \(\hat{p}_t\in\calP_t\) can be arbitrarily chosen.
The affine function \(\calV_{t-1}(\cdot)\) is a valid linear cut for the cost-to-go function \(\calQ_{t-1}(\cdot)\) because for any \(x_{t-1}\in\calX_{t-1}\),
\begin{align}
    \calQ_{t-1}(x_{t-1})-\calV_{t-1}(x_{t-1})
    &\ge\bbE_{\xi_t\sim\hat{p}_t}\Bigl[Q_t(x_{t-1};\xi_t)-V_t(x_{t-1};\xi_t)\Bigr]\ge0.\label{eq:AggregateLinearCutValidness}
\end{align}

The above procedure of generating valid linear cuts requires finding the supremum in the Lagrangian dual problem~\eqref{eq:RecursionLagrangianDual}.
If~\eqref{eq:DR-MSCP-Def} does not have relatively complete recourse (RCR), the supremum in~\eqref{eq:RecursionLagrangianDual} may not be attained.
Even with RCR, as we use some under-approximation \(\ulcQ_t\) in the dual problem~\eqref{eq:RecursionLagrangianDual}, the maximum may only be attained outside a ball with its radius growing with \(T\), as illustrated below. 
\begin{example}\label{ex:RecursionLipschitzConstant}
    Consider a deterministic problem (i.e., \(\abs{\Xi_t}=1\) for all \(t\in\calT'\)) defined by the recursions for \(t=T,\dots,2\)
    \begin{align*}
        \calQ_{t-1}(x_{t-1})\coloneqq\min_{y_t,x_t}\quad& 1+y_t+\calQ_{t}(x_t)\\
        \mathrm{s.t.}\quad &y_t\ge0,\ y_t\ge1-2x_{t-1},\ x_t\le x_{t-1}+\frac{1}{2},\ 0\le x_t\le 1.
    \end{align*}
    Note that for each stage \(t\in\calT'\), since \(x_{t-1}\in[0,1]\), we have a feasible solution \(x_{t}=1/2\) and \(y_t=0\), which implies that \(\calQ_{t}(x_{t})=T-t+\max\{0,1-2x_{t}\}\) for \(t\le T-1\).
    However, if we start our approximation with points \(x_t^0=0\) for all stages \(t\in\calT\), then the linear cut \(\calV_{t-1}(x_{t-1})=V_t(x_{t-1})\coloneqq \ubar{v}_t+\bangle{\hat{\lambda}_t}{x_t-x_t^0}\) can be generated from the following dual problem at stage \(t\in\calT'\):
    \begin{align*}
        \max_{\lambda_t\in\bbR}\inf_{z_t,y_t,x_t}\quad& 1+y_t+\lambda_t(0-z_t)+\ulcQ_{t}^0(x_t)\\
        \mathrm{s.t.}\quad &y_t\ge0,\ y_t\ge1-2z_t,\ x_t\le z_t+\frac{1}{2},\ 0\le x_t\le 1.
    \end{align*}
    The optimal dual solution is \(\hat{\lambda}_T=-2\) in stage \(T\).
    Thus the under-approximation of \(\calQ_{T-1}(x_{T-1})\) after adding the linear cut is \(\ulcQ_{T-1}^0(x_{T-1})=\calV_T(x_{T-1})=2-2x_{T-1}\).
    The dual problem in stage \(t=T-1\) now becomes
    \begin{align*}
        \max_{\lambda_t\in\bbR}\inf_{z_t,y_t,x_t}\quad& 1+y_t-\lambda_t z_t+(T-t+1)-c_tx_t\\
        \mathrm{s.t.}\quad &y_t\ge0,\ y_t\ge1-2z_t,\ x_t\le z_t+\frac{1}{2},\ 0\le x_t\le 1.
    \end{align*}
    with \(c_{T-1}=2\).
    Note that if \(\lambda_t>-c_t-2\), the inner minimum value would be strictly smaller than \(T-t+3-c_t/2\) (e.g., taking \(z_t=1/2\), \(y_t=0\), and \(x_t=1\)).
    Thus any optimal dual solution must satisfy \(\lambda_t\le -c_t-2\) as \(z_t=0\), \(y_t=1\), and \(x_t=1/2\) gives an inner minimum value \(T-t+3-c_t/2\).
    By repeating the argument recursively, it can be checked that the under-approximation \(\ulcQ_t^0(x_t)=\calV_t(x_t)=T-t+1-c_tx_t\) with \(c_t=2(T-t)\).
This implies that any optimal dual solution in stage \(t\) would lie outside the open ball of radius \(2(T-t+1)\), which is greater than 2, the Lipschitz constant of \(\calQ_t\) on \(\calX_t\).
We illustrate the example with Figure~\ref{fig:ExampleLipschitzGrowth} using \(T=3\).
\end{example}

\begin{figure}[htbp]
    \centering
    \begin{subfigure}{0.4\textwidth}
        \includegraphics[width=\textwidth]{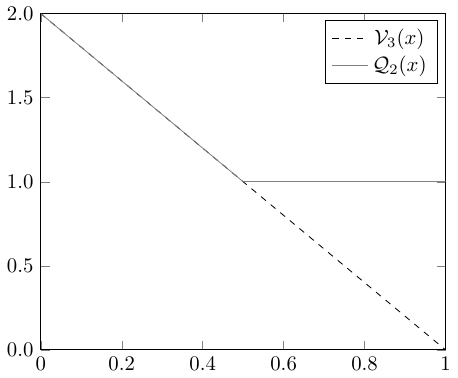}
        \caption{\(t=2\)}
    \end{subfigure}
    \hspace{5mm}
    \begin{subfigure}{0.4\textwidth}
        \includegraphics[width=\textwidth]{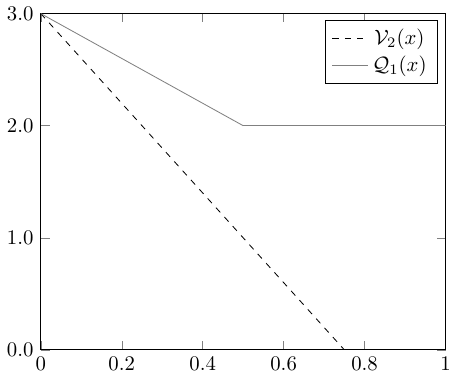}
        \caption{\(t=1\)}
    \end{subfigure}
    \caption{Cost-to-go Functions and First Iteration Linear Cuts for \(T=3\)}
    \label{fig:ExampleLipschitzGrowth}
\end{figure}

\subsection{Exactness of Dual Bounds}
\label{subsec:DualBoundExactness}

Example~\ref{ex:RecursionLipschitzConstant} suggests that the norm of optimal dual solutions for linear cut generation cannot be directly bounded by the Lipschitz constants of the value functions.
Intuitively speaking, larger norms of dual variables lead to steeper linear cuts~\eqref{eq:RecursionLagrangianDual}, which makes the under-approximation less useful at points away from the previously visited ones.
For this reason, we propose to enforce bounds on the dual variables for recursive approximations and investigate the exactness of these artificial dual bounds.
To begin with, let us fix algorithmic parameters \(M_t>0\) for stages \(t\in\calT'\) and define alternative value function recursively
\begin{equation}\label{eq:RegularizedValueFunction}
    Q_t^\Reg(x_{t-1};\xi_t)\coloneqq\max_{\nVert{\lambda_t}_*\le M_t}\inf_{\substack{x_t\in\calX_t,\\z_t\in\bbR^{d_{t-1}}}}f_t(x_{t-1},x_t;\xi_t)+\calQ_t^\Reg(x_t)+\bangle{\lambda_t}{x_{t-1}-z_t},
\end{equation}
with \(\calQ_T^\Reg(x_T)\equiv0\) on \(\calX_T\) and
\begin{equation}\label{eq:RegularizedCostToGo}
    \calQ_{t-1}^\Reg(x_{t-1})\coloneqq\sup_{p_t\in\calP_t}\bbE_{\xi_t\sim p_t} Q_t^\Reg(x_{t-1};\xi_t)
\end{equation}
for \(t=T,T-1,\dots,2\).
We call them (Lipschitz) regularized value functions and (Lipschitz) regularized cost-to-go functions, due to the following observation.
\begin{proposition}\label{prop:RegularizationValueFunction}
    For any \(t\in\calT'\) and \(\xi_t\in\Xi_t\), the function \(Q_t^\Reg(\cdot;\xi_t)\) is \(M_t\)-Lipschitz continuous, and \(Q_t^\Reg(x_{t-1};\xi_t)\le Q_t(x_{t-1};\xi_t)\) for all \(x_{t-1}\in\calX_{t-1}\).
    Moreover, we can write \(Q_t^\Reg(x_{t-1};\xi_t)\) as a value function
    \[
        Q_t^\Reg(x_{t-1};\xi_t)=\min_{\substack{x_t\in\calX_t,\\z_t\in\bbR^{d_{t-1}}}} f_t(z_t,x_t;\xi_t)+\calQ_t^\Reg(x_t)+M_t\nVert{x_{t-1}-z_t}.
    \]
\end{proposition}
\begin{proof}
    We begin our proof for the equation with the following observation
    \begin{align*}
    &\max_{\nVert{\lambda_t}_*\le M_t}\inf_{\substack{x_t\in\calX_t,z_t\in\bbR^{d_{t-1}}}}f_t(x_{t-1},x_t;\xi_t)+\calQ_t^\Reg(x_t)+\bangle{\lambda_t}{x_{t-1}-z_t}\\
    &=\inf_{\substack{x_t\in\calX_t,z_t\in\bbR^{d_{t-1}}}}\max_{\nVert{\lambda_t}_*\le M_t} f_t(x_{t-1},x_t;\xi_t)+\calQ_t^\Reg(x_t)+\bangle{\lambda_t}{x_{t-1}-z_t}\\
    &=\min_{\substack{x_t\in\calX_t,z_t\in\bbR^{d_{t-1}}}}f_t(x_{t-1},x_t;\xi_t)+\calQ_t^\Reg(x_t)+M_t\nVert{x_{t-1}-z_t}.
    \end{align*}
    Here, the first equality is due to Sion's minimax theorem~\cite{komiya1988elementary}, where the maximization is taken over a compact ball of radius \(M_t\).
    The second equality follows from the dual representation of the norm functions~\cite[A.1.6]{boyd2004convex}.
    The infimum is attained since the sum \(f_t(x_{t-1},x_t;\xi_t)+\calQ_t^\Reg(x_t)+M_t\nVert{x_t-z_t}\) is bounded from below by our nonnegativity assumption of \(f_t\).
    Now it can be checked recursively for \(t=T,\dots,2\) using the definition of \(\calQ_t^\Reg\) that \(Q_t^\Reg(x_{t-1};\xi_t)\le Q_t(x_{t-1};\xi_t)\) for any \(x_{t-1}\in\calX_{t-1}\) because \(z_t=x_{t-1}\) is a feasible solution to the minimization.
    
    It remains to show the \(M_t\)-Lipschitz continuity of \(Q_t^\Reg(\cdot;\xi_t)\).
    Pick any \(x^1,x^2\in\calX_{t-1}\) and let \(x_t^i,z_t^i\) denote the solutions in the minimization associated with \(x^i\) for \(i=1\) and \(2\), respectively.
    Then, as \(x_t^2,z_t^2\) are also feasible solutions for the minimization associated with \(x^1\), we have
    \begin{align*}
        Q_t^\Reg(x^1;\xi_t)-Q_t^\Reg(x^2;\xi_t)
        &\le f_t(z_t^2,x_t^2;\xi_t)+\calQ_{t}^\Reg(x_t^2)+M_t\nVert{x^1-z_t^2}\\
        &\quad-f_t(z_t^2,x_t^2;\xi_t)-\calQ_{t}^\Reg(x_t^2)-M_t\nVert{x^2-z_t^2}\\
        &=M_t\bigl(\nVert{x^1-z_t^2}-\nVert{x^2-z_t^2}\bigr)
        \le M_t\nVert{x^1-x^2}.
    \end{align*}
    Now we can repeat the argument with exchanged indices 1 and 2 and show that \(Q_t^\Reg(x^2;\xi_t)-Q_t^\Reg(x^1;\xi_t)\le M_t\nVert{x^1-x^2}\), which completes the proof.
\end{proof}

We can now generate a linear cut \(V_t(\cdot;\xi_t)\) using a dual solution \(\hat{\lambda}_t\) and its associated infimum value \(\ubar{v}_t>-\infty\) of the following dual problem (cf.~\eqref{eq:RecursionLagrangianDual})
\begin{equation}\label{eq:RecursionLagrangianBoundedDual}
    \max_{\nVert{\lambda_t}_*\le M_t}\inf_{\substack{x_t\in\calX_{t},\\z_t\in\bbR^{d_{t-1}}}}\quad f_t(z_t,x_t;\xi_t)+\ulcQ_{t}(x_t)+\bangle{\lambda_t}{\hat{x}_{t-1}-z_t}.
\end{equation}
As \(\ulcQ_t\) is an under-approximation of the regularized cost-to-go function \(\calQ_t^\Reg\), we can use the same argument as in~\eqref{eq:LinearCutValidness} to see the validness of \(V_t(\cdot;\xi_t)\) as a linear cut for \(Q_t^\Reg(\cdot;\xi_t)\), for all \(\xi_t\in\Xi_t\), and thus also the validness of \(\calV_{t-1}(\cdot)\) as an aggregate linear cut for \(\calQ_{t-1}^\Reg(\cdot)\).
The Lipschitz constants of \(V_t(\cdot;\xi_t)\) and \(\calV_{t-1}(\cdot)\) are bounded by \(M_t\).

The bounded dual problems~\eqref{eq:RecursionLagrangianBoundedDual} for linear cut generation, by Proposition~\ref{prop:RegularizationValueFunction}, are equivalent to replacing the \(Q_t\) and \(\calQ_t\) with the regularized ones, \(Q_t^\Reg\) and \(\calQ_t^\Reg\), respectively.
We show that such replacement does not compromise any feasibility or optimality.
The following lemma is well-known, due to Kirszbraun~\cite{kirszbraun1934zusammenziehende} and McShane~\cite{mcshane1934extension}.
We provide a proof for easy reference.
\begin{lemma}\label{lemma:ConvexInfimalConvolutionLipschitz}
    Let \(\calX\subseteq\bbR^d\) be a full-dimensional convex set and \(f:\bbR^d\to\bbR\) be a convex function.
    If the restriction \(f\vert_\calX\) is \(M\)-Lipschitz continuous, then we have \(f(x)=\inf_{z\in\bbR^d}\{f(z)+M\nVert{x-z}\}\) for any \(x\in\calX\).
\end{lemma}
\begin{proof}
    Assume for contradiction that for some \(x\in\calX\), there exist \(z\in\bbR^d\) and \(\epsilon>0\) such that \(f(x)>f(z)+(1+\epsilon)M\nVert{z-x}\).
    If \(x\) lies in the interior \(\mathrm{int}\calX\) of \(X\), then we can find \(z'=x+\delta(x-z)\in\calX\) for some \(\delta>0\).
    By convexity, \((f(z')-f(x))/\nVert{z'-x}\ge(f(x)-f(z))/\nVert{x-z}>(1+\epsilon)M\), which contradicts the \(M\)-Lipschitz continuity of \(f\vert_\calX\).

    Otherwise if \(x\notin\mathrm{int}\calX\), since \(\calX\) is full-dimensional, we can find \(x'\in\mathrm{int}\calX\) with \(\nVert{x'-x}\le \nVert{z-x}\cdot\epsilon/2\), so \(\nVert{x'-z}\le(1+\epsilon/2)\nVert{z-x}\).
    By the \(M\)-Lipschitz continuity of \(f\vert_\calX\), we have \(f(x')\ge f(x)-M\nVert{x-x'}>f(z)+(1+\epsilon/2)M\nVert{z-x}\).
    Therefore,
    \[
        \frac{f(x')-f(z)}{\nVert{x'-z}}>\frac{(1+\epsilon/2)M\nVert{z-x}}{(1+\epsilon/2)\nVert{z-x}}=M,
    \]
    which shows contradiction by the argument above since \(x'\in\mathrm{int}\calX\).
\end{proof}

\begin{proposition}\label{prop:LipschitzRegularizationExactness}
    Suppose that the state spaces \(\calX_t\) are full-dimensional.
    If the value function \(Q_t(\cdot;\xi_t)\) is \(M_t\)-Lipschitz continuous for any \(\xi_t\in\Xi_t\) and \(t\in\calT\), then we have \(Q_t^\Reg(x_{t-1};\xi_t)=Q_t(x_{t-1};\xi_t)\) for all \(x_{t-1}\in\calX_{t-1}\) and \(t\in\calT\).
\end{proposition}
\begin{proof}
    We prove the assertion by applying Lemma~\ref{lemma:ConvexInfimalConvolutionLipschitz} recursively for \(t=T,T-1,\dots,1\).
    Suppose that \(\calQ_t=\calQ_t^\Reg\) on \(\calX_t\) for some \(t\in\calT\), which holds trivially for \(t=T\).
    We see that \(Q_t(\cdot;\xi_t)=Q_t^\Reg(\cdot;\xi_t)\) everywhere on \(\calX_{t-1}\) for any \(\xi_t\in\Xi_t\) by Proposition~\ref{prop:RegularizationValueFunction}.
    Thus by definition~\eqref{eq:RegularizedCostToGo}, \(\calQ_{t-1}=\calQ_{t-1}^\Reg\) on \(\calX_{t-1}\).
\end{proof}

Proposition~\ref{prop:LipschitzRegularizationExactness} ensures the exactness of the bounded dual recursion~\eqref{eq:RegularizedValueFunction}, provided that the value functions \(Q_t(\cdot;\xi_t)\) are \(M_t\)-Lipschitz continuous for some known values \(M_t>0\).
We remark that the idea of bounding the dual variables, or equivalently adding a regularization term in the dual problems, could work more generally for multistage stochastic problems without RCR~\cite{zhang2022stochastic}.
While in general it is not easy to estimate the value of Lipschitz constants \(M_t\), updating the estimate \(\tilde{M}_t\) of \(M_t\) by a constant factor iteratively and resolving the DR-MCO problem will only add a logarithmic factor in the following complexity analysis.
We thus assume that \(M_t\) is known in the rest of the paper.

\section{Algorithms and Complexity Analysis}
\label{sec:Algorithms}
In this section, we first define single stage subproblem oracles (SSSO) for DR-MCO~\eqref{eq:DR-MSCP-Def}, based on which we define the notion of complexity of the algorithms.
A simple implementation of the SSSO is then discussed for the finitely supported problems.
We generalize the consecutive DDP (CDDP) algorithm to DR-MCO and more importantly, we introduce a new nonconsecutive DDP algorithm (NDDP) together with their complexity analysis.

\subsection{Single Stage Subproblem Oracles}
\label{subsec:SubproblemOracle}
A subproblem oracle is an oracle that gives a solution to the subproblem given the intermediate data generated by the algorithm.
The single stage subproblem oracles (SSSO) used in this paper solve an approximation of the problem defined by~\eqref{eq:RegularizedValueFunction} and~\eqref{eq:RegularizedCostToGo} for some stage \(t\in\calT\).
\begin{definition}[Initial stage subproblem oracle]
    \label{def:InitialStageOracle}
    Let \(\ulcQ_1,\olcQ_1:\calX_1\to\bbR\cup\{+\infty\}\) denote two lsc, convex functions, with \(\ulcQ_1(x_1)\le\calQ_1^\Reg(x_1)\le\olcQ_1(x_1)\) for any \(x_1\in\calX_1\).
    Consider the following subproblem for the first stage \(t=1\),
    \begin{equation}\label{eq:InitialStageOracle}
        \min_{x_1\in\calX_1}f_1(x_0,x_1;\xi_1)+\ulcQ_1(x_1).\tag{I}
    \end{equation}
    The initial stage subproblem oracle provides an optimal solution \(x_1\) to~\eqref{eq:InitialStageOracle} and calculates the approximation gap \(\gamma_1\coloneqq\olcQ_1(x_1)-\ulcQ_1(x_1)\).
    We thus define the subproblem oracle formally as a map \(\scrO_1:(\ulcQ_1,\olcQ_1)\mapsto(x_1;\gamma_1)\).
\end{definition}

\begin{definition}[Noninitial stage subproblem oracle]
    \label{def:NoninitialStageOracle}
    For any \(t\in\calT'\), let \(\ulcQ_t,\olcQ_t:\calX_t\to\bbR\cup\{+\infty\}\) denote two lsc, convex functions, with \(\ulcQ_t(x_t)\le\calQ_t^\Reg(x_t)\le\olcQ_t(x_t)\) for all \(x_t\in\calX_t\).
    Then given a feasible state \(x_{t-1}\in\calX_{t-1}\), the stage-\(t\) subproblem oracle provides a feasible state \(x_t\in\calX_t\), an \(M_t\)-Lipschitz continuous linear cut \(\calV_{t-1}(\,\cdot\,)\), and an over-estimate value \(v_{t-1}\) such that
    \begin{compactitem}
        \item they are valid, i.e., \(\calV_{t-1}(\cdot)\le\calQ_{t-1}^\Reg(\cdot)\) on \(\calX_{t-1}\) and \(v_{t-1}\ge\calQ_{t-1}^\Reg(x_{t-1})\);
        \item the gap is controlled, i.e., \(v_{t-1}-\calV_{t-1}(x_{t-1})\le\gamma_t\coloneqq\olcQ_t(x_t)-\ulcQ_t(x_t)\).
    \end{compactitem}
    We thus define the subproblem oracle formally as a map \(\scrO_t:(x_{t-1},\ulcQ_t,\olcQ_t)\mapsto(\calV_{t-1},v_{t-1},x_t;\gamma_t)\).
\end{definition}

The noninitial stage subproblem oracles are different from the initial stage subproblem oracle, in the sense that it does not necessarily provide any optimal solution to some optimization problem.
Instead, it provides some feasible state, which could be used for exploration of the following stages, a linear cut and an estimate value for updating the approximation in the previous stages.
We provide an illustration of the noninitial SSSO using finitely supported DR-MCO below.

\subsubsection{SSSO Implementation for Finitely Supported DR-MCO}
\label{subsec:SSSOImplementation}
We now propose a possible realization of the noninitial stage subproblem oracle based on the linear cut generation procedure discussed in Section~\ref{subsec:RecursionApproximation}.
Suppose \(\Xi_t=\{\hat{\xi}_{t,1},\dots,\hat{\xi}_{t,n_t}\}\) for each \(t\in\calT'\).
By solving the bounded Lagrangian dual problem~\eqref{eq:RecursionLagrangianBoundedDual} for each \(\hat{\xi}_{t,k}\), we can get primal solutions \(\hat{x}_{t,k}\), \(\hat{z}_{t,k}\), and linear cuts \(V_{t}(x_{t-1};\hat{\xi}_{t,k})\) for \(k=1,\dots,n_t\) that are all \(M_t\)-Lipschitz continuous.
We then aggregate them using \(\calV_{t-1}(\cdot)\coloneqq\sum_{k=1}^{n_t}\hat{p}_{t,k}V_t(\cdot;\hat{\xi}_{t,k})\), where \(\hat{p}_{t}\in\argmax_{p_{t}\in\calP_{t}}\sum_{k=1}^{n_t}p_{t,k}V_t(x_{t-1};\hat{\xi}_{t,k})\) is a maximizer at the given state \(x_{t-1}\).
It is clear that \(\calV_{t-1}(\cdot)\) is also \(M_t\)-Lipschitz continuous.
Next we solve the maximization problem
\begin{equation}\label{eq:NoninitialStageOracle-Overestimate}
    v_{t-1}\coloneqq\max_{p_t\in\calP_t}\sum_{k=1}^{n_t}p_{t,k}\cdot\bigl(f_t(x_{t-1},\hat{x}_{t,k};\hat{\xi}_{t,k})+\olcQ_t(\hat{x}_{t,k})+M_t\nVert{x_{t-1}-\hat{z}_{t,k}}\bigr).
\end{equation}
Note that by assumption \(\olcQ_t(x_t)\ge\calQ_t^\Reg(x_t)\) for all \(x_t\in\calX_t\), and \((\hat{x}_{t,k},\hat{z}_{t,k})\) is a feasible solution to the problem
\[
    \min_{\substack{x_t\in\calX_t,\\z_t\in\bbR^{d_{t-1}}}} f_t(z_t,x_t;\hat{\xi}_{t,k})+\olcQ_t(x_t)+M_t\nVert{x_{t-1}-z_t}\ge Q_t^\Reg(x_{t-1};\hat{\xi}_{t,k}).
\]
Thus the value \(v_{t-1}\ge\sum_{k=1}^{n_t}p_{t,k}Q^\Reg_t(x_{t-1};\hat{\xi}_{t,k})=\calQ_{t-1}^\Reg(x_{t-1})\) is a valid overestimation.

It remains to determine the feasible state \(x_t\) and calculate the gap \(\gamma_t\) that satisfy the second requirement in Definition~\ref{def:NoninitialStageOracle}.
Let \(\gamma_{t,k}\coloneqq\olcQ_t(\hat{x}_{t,k})-\ulcQ_t(\hat{x}_{t,k})\) for each \(k=1,\dots,n_t\).
We pick the index \(k^*\) corresponding to the largest gap \(\gamma_{t,k^*}\), and set \(x_t=\hat{x}_{t,k^*}\), \(\gamma_t=\gamma_{t,k^*}\).
Consequently, we have
\begin{align}
    v_{t-1}&=\max_{p_{t}\in\calP_{t}}\sum_{k=1}^{n_t}p_{t,k}\bigl(f_t(\hat{z}_{t,k},\hat{x}_{t,k};\hat{\xi}_{t,k})+\olcQ_{t}(\hat{x}_{t,k})+M_t\nVert{x_{t-1}-\hat{z}_{t,k}}\bigr)\notag\\
           &=\max_{p_{t}\in\calP_{t}}\sum_{k=1}^{n_t}p_{t,k}\bigl(f_t(\hat{z}_{t,k},\hat{x}_{t,k};\hat{\xi}_{t,k})+\gamma_{t,k}+\ulcQ_{t}(\hat{x}_{t,k})+M_t\nVert{x_{t-1}-\hat{z}_{t,k}}\bigr)\notag\\
           &\le \gamma_t+\max_{p_{t}\in\calP_{t}}\sum_{k=1}^{n_t}p_{t,k}\bigl(f_t(\hat{z}_{t,k},\hat{x}_{t,k};\hat{\xi}_{t,k})+\ulcQ_{t}(\hat{x}_{t,k})+M_{t}\nVert{x_{t-1}-\hat{z}_{t,k}}\bigr)\notag\\
    &= \gamma_t+\calV_{t-1}(x_{t-1}).\notag
\end{align}
We summarize the above implementation in Algorithm~\ref{alg:SubproblemOracle}.
In the special case of MSCO, the ambiguity set \(\calP_t\) is a singleton so the maximization problems in lines 7-8 are superficial where we simply need to aggregate the linear cuts \(V_t(x;\hat{\xi}_{t,k})\) and the over-estimate values \(\bar{v}_{t,k}\) according to the nominal probability weights.
Consequently, for MSCO, Algorithm~\ref{alg:SubproblemOracle} reduces to solving both the forward-step subproblem (for \(\hat{x}_{t,k}\) and \(\hat{z}_{t,k}\)) and the backward-step subproblem (for \(\hat{\lambda}_{t,k}\) and \(\ubar{v}_{t,k}\)) of the deterministic DDP algorithm in~\cite{zhang2022stochastic}.
\begin{algorithm}[ht]
    \caption{A Realization of Noninitial Stage Subproblem Oracle}
    \label{alg:SubproblemOracle}
    \begin{algorithmic}[1]
        \Require{\(\Xi_t=\{\hat{\xi}_{t,1},\dots,\hat{\xi}_{t,n_t}\}\) and \((x_{t-1},\ulcQ_t,\olcQ_t)\)} 
        \Ensure{\((\calV_{t-1},v_{t-1},x_t;\gamma_t)\) satisfying Definition~\ref{def:NoninitialStageOracle}}
        \For{\(k=1,\dots,n_t\)}
        \State{solve the bounded Lagrangian dual problem~\eqref{eq:RecursionLagrangianBoundedDual} associated with \(\hat{\xi}_{t,k}\)} 
        \State{collect the primal-dual solution pair \((\hat{x}_{t,k},\hat{z}_{t,k};\hat{\lambda}_{t,k})\) and the optimal value \(\ubar{v}_{t,k}\)}
        \State{define \(V_t(x;\hat{\xi}_{t,k})\coloneqq\ubar{v}_{t,k}+\bangle{\hat{\lambda}_{t,k}}{x-x_{t-1}}\)}
        \State{calculate \(\gamma_{t,k}\coloneqq\olcQ_t(\hat{x}_{t,k})-\ulcQ_t(\hat{x}_{t,k})\)}
        \State{let \(\bar{v}_{t,k}:=f_t(x_{t-1},\hat{x}_{t,k};\hat{\xi}_{t,k})+\olcQ_t(\hat{x}_{t,k})+M_t\nVert{x_{t-1}-\hat{z}_{t,k}}\)}
        \EndFor
        \State{construct \(\calV_{t-1}(x)\coloneqq\sum_{k=1}^{n_t}\hat{p}_{t,k}V_{t}(x;\hat{\xi}_{t,k})\) with \(\hat{p}_{t}\in\argmax_{p_{t}\in\calP_{t}}\sum_{k=1}^{n_t}p_{t,k}\ubar{v}_{t,k}\)}
        \State{calculate \(v_{t-1}\coloneqq\max_{p_{t}\in\calP_{t}}\sum_{k=1}^{n_t}p_{t,k}\bar{v}_{t,k}\)}
        \State{find \(k^*\) such that \(\gamma_{t,k^*}\ge\gamma_{t,k}\) for all \(k=1,\dots,n_t\) and set \(x_t\coloneqq \hat{x}_{t,k^*},\gamma_t\coloneqq\gamma_{t,k^*}\)}
    \end{algorithmic}
\end{algorithm}

We remark that Algorithm~\ref{alg:SubproblemOracle} is not the only way to realize the SSSO in Definition~\ref{def:NoninitialStageOracle}.
For example, it is discussed in \cite{georghiou_robust_2019} that a polyhedral single stage subproblem of MRCO can be reformulated as mixed-integer linear optimization, which may then be solved by branch-and-bound type algorithms.
Therefore, the introduction of SSSO may benefit our discussion by avoiding restrictions of solution methods in each stage, even when the uncertainty sets are finite.
Besides, with SSSO, the complexity analysis may better reflect the computation time as the for-loop in Algorithm~\ref{alg:SubproblemOracle} can be easily parallelized.
We also show in the next section that SSSO enables us to introduce a nonconsecutive dual dynamic programming algorithm.

\subsection{Dual Dynamic Programming Algorithms}
\label{subsec:DualDPAlgorithms}

With the subproblem oracles, we first extend the consecutive dual dynamic programming (CDDP) algorithm to the DR-MCO setting. 
Then we focus on a new nonconsecutive dual dynamic programming (NDDP) algorithm, which can achieve an improved subproblem oracle complexity over the CDDP, as will be shown in Section~\ref{subsec:ComplexityUB} 
Both CDDP and NDDP algorithms produce deterministic upper bounds for exploration and termination. 

\subsubsection{Consecutive Dual Dynamic Programming}
To ease the notation, we use \(\conv\{h_1,h_2\}\) to denote the function corresponding to the closed convex hull of the epigraphs of functions \(h_1\) and \(h_2\).
More precisely, we define
\[
    \conv\{h_1,h_2\}(x)\coloneqq \sup_{\lambda}\inf_{z}\left\{\min\{h_1(z),h_2(z)\}+\bangle{\lambda}{x-z}\right\}.
\]
Note that if \(h_1,h_2\) are further polyhedral and proper, then by linear optimization strong duality, 
the value of the function \(\conv\{h_1,h_2\}\) at any $x\in\bbR^n$ is given as
\[
    \conv\{h_1,h_2\}(x)=\min\left\{
    v\in\bbR:
    \begin{aligned}
    &\exists\;z_i\in\bbR^n,v_i\ge h_i(z_i),\text{ and }\mu_i\in\bbR_{\ge0},\ i=1,2,\\
    &\mathrm{s.t.}\,\mu_1+\mu_2=1,x=\mu_1z_1+\mu_2z_2,v=\mu_1v_1+\mu_2v_2
    \end{aligned}
    \right\}. 
\]
\begin{algorithm}[ht]
    \caption{Consecutive Dual Dynamic Programming Algorithm}
    \label{alg:ConsecutiveDualDP}
    \begin{algorithmic}[1]
        \Require{subproblem oracles \(\scrO_t\) for \(t\in\calT\), target optimality gap \(\epsilon>0\)} 
        \Ensure{an \(\epsilon\)-optimal first stage solution \(x_1^*\)}
        \State{initialize: \(\ulcQ_t^0\leftarrow 0,\olcQ_t^0\leftarrow +\infty,t\in\calT\backslash\{T\}\); \(\ulcQ_T^j,\olcQ_T^j\leftarrow0,j\in\bbN\); \(i\leftarrow 1\)}
        \State{evaluate \((x_1^1;\gamma_1^1)\leftarrow\scrO_1(\ulcQ_1^0,\olcQ_1^0)\)}
        \State{set \(\LB\leftarrow f_1(x_0,x_1^1;\xi_1),\ \UB\leftarrow +\infty\)}
        \While{\(\UB-\LB >\epsilon\)}
        \For{\(t=2,\dots,T\)}
        \State{evaluate \((\calV_{t-1}^{i},v_{t-1}^{i},x_t^i;\gamma_t^i)=\scrO_t(x_{t-1}^i,\ulcQ_t^{i-1},\olcQ_t^{i-1})\)}
        \Comment{forward step}
        \EndFor
        \For{\(t=T,\dots,2\)}
        \State{update \(\ulcQ_{t-1}^i(x)\leftarrow \max\{\ulcQ_{t-1}^{i-1}(x),\calV_{t-1}^i(x)\}\)}
        \Comment{backward step}
        \State{update \(\olcQ_{t-1}^i(x)\leftarrow \conv\{\olcQ_{t-1}^{i-1}(x),v_{t-1}^i+M_{t}\nVert{x-x_{t-1}^i}\}\)}
        \EndFor
        \State{evaluate \((x_1^{i+1};\gamma_1^{i+1})\leftarrow\scrO_1(\ulcQ_1^i,\olcQ_1^i)\)}
        \Comment{initial stage step}
        \State{update \(\LB\leftarrow f_1(x_0,x_1^{i+1};\xi_1)+\ulcQ_1^i(x_1^{i+1})\)}
        \State{update \(\UB'\leftarrow f_1(x_0,x_1^{i+1};\xi_1)+\olcQ_1^i(x_1^{i+1})\)}
        \If{\(\UB'<\UB\)}
        \State{set \(x_1^*\leftarrow x_1^{i+1}\),  \(\UB\leftarrow\UB'\)}
        \EndIf
        \State{update \(i\leftarrow i+1\)}
        \EndWhile
    \end{algorithmic}
\end{algorithm}

For each iteration \(i\in\bbN\), the main loop of CDDP (Algorithm~\ref{alg:ConsecutiveDualDP}) consists of three steps.
The forward step uses the state \(x_{t-1}^i\) in the previous stage and the approximations \(\ulcQ_{t}^{i-1}\) and \(\olcQ_{t}^{i-1}\) to produce a new state \(x_t^i\).
Then the backward step at stage \(t\) uses the cut \(\calV_{t-1}^i(x)\) and the value \(v_{t-1}^i\) to update the approximations \(\ulcQ_{t-1}^i,\olcQ_{t-1}^i\) in its precedent stage \(t-1\).
The initial stage step produces a first stage solution \(x_1^{i+1}\) and updates the lower and upper bounds.

We next discuss the correctness of Algorithm~\ref{alg:ConsecutiveDualDP}, i.e., the returned solution \(x_1^*\) is \(\epsilon\)-optimal as defined in~\eqref{eq:DR-MSCP-NearOptimalSolution}, while leaving the finiteness proof to Section~\ref{subsec:ComplexityUB}.
From the termination of the while-loop, it suffices to show that the approximations are valid \(\ulcQ_t^i(x)\le\calQ_t^\Reg(x)\le\olcQ_t^i(x)\) for each \(t\in\calT\) and \(i\in\bbN\).
The first inequality follows from the validness of linear cuts \(\calV_t^i\) (Definition~\ref{def:NoninitialStageOracle}).
The second inequality is due to the \(M_{t}\)-Lipschitz continuity of \(\calQ_{t-1}^\Reg\) (Proposition~\ref{prop:RegularizationValueFunction}).
In particular, the inequality \(v_{t-1}^i\ge\calQ_{t-1}^\Reg(x_{t-1}^i)\) implies \(v_{t-1}^i+M_t\nVert{x-x_{t-1}^i}\ge\calQ_t^\Reg(x)\) for all \(x\in\calX_{t-1}\).
Given that \(\olcQ_{t-1}^{i-1}(x)\ge\calQ_{t-1}^\Reg(x)\) for \(x\in\calX_{t-1}\), which is obviously true for \(i=1\), we conclude that
\begin{equation*}
    \min\{\olcQ_{t-1}^{i-1}(x),v_{t-1}^i+M_{t}\Vert{x-x_{t-1}^i}\Vert\}\ge\calQ_{t-1}^\Reg(x),\quad\forall\,x\in\calX_{t-1}.
\end{equation*}
By taking the closed convex hull of the epigraphs on both sides, we have shown that \(\olcQ_{t-1}^i(x)\ge\calQ_{t-1}^\Reg(x)\) for all \(x\in\calX_{t-1}\).
The above argument shows that for all \(i\in\bbN\), the approximations are valid, which then implies the correctness of the algorithm.

We comment that the linear cut \(\calV_{t-1}^{i}\) and the over-estimate value \(v_{t-1}^{i}\) are generated using only the information in the previous iteration \(i-1\).
In fact, the subproblem oracles can be re-evaluated in the backward steps to produce tighter approximations.
We simply keep the CDDP algorithm in its current form because it is already sufficient for us to prove its complexity bounds, and hence its convergence.
We next propose a nonconsecutive version, NDDP, that conducts more efficient approximation updates.
\begin{algorithm}[ht]
    \caption{Nonconsecutive Dual Dynamic Programming (NDDP) Algorithm}
    \label{alg:NonconsecutiveDualDP}
    \begin{algorithmic}[1]
        \Require{subproblem oracles \(\scrO_t\) for \(t\in\calT\), opt.\ and approx.\ gaps \(\epsilon=\delta_1>\cdots>\delta_T=0\)} 
        \Ensure{an \(\epsilon\)-optimal first stage solution \(x_1^*\)}
        \State{initialize: \(\ulcQ_t^0\leftarrow 0,\olcQ_t^0\leftarrow +\infty,t\in\calT\backslash\{T\}\); \(\ulcQ_T^j,\olcQ_T^j\leftarrow0,j\in\bbN\); \(i_t\leftarrow 0,t\in\calT\)}
        \State{set \(\LB\leftarrow 0,\ \UB\leftarrow +\infty\), \(t\leftarrow 1\)}
        \Loop
        \State{update \(i_t\leftarrow i_t+1\)}
        \If{\(t=1\)}
        \State{evaluate \((x_1^{i_1};\gamma_1^{i_1})\leftarrow\scrO_1(\ulcQ_1^{i_1},\olcQ_1^{i_1})\)}
        \Comment{initial stage step}
        \State{update \(\LB\leftarrow f_1(x_0,x_1^{i_1};\xi_1)+\ulcQ_1^{i_1}(x_1^{i_1})\)}
        \State{update \(\UB'\leftarrow f_1(x_0,x_1^{i_1};\xi_1)+\olcQ_1^{i_1}(x_1^{i_1})\)}
        \If{\(\UB'<\UB\)}
        \State{set \(x_1^*\leftarrow x_1^{i_1}\),  \(\UB\leftarrow\UB'\)}
        \EndIf
        \If{\(\UB-\LB\le\epsilon\)}
        \State{\bfseries{break the loop}}
        \EndIf
        \State{maintain \(\ulcQ_2^{i_2+1}(x)\leftarrow\ulcQ_2^{i_2}(x),\;\olcQ_2^{i_2+1}(x)\leftarrow\olcQ_2^{i_2}(x)\)}
        \State{set \(t\leftarrow t+1\)}
        \Else
        \State{evaluate \((\calV_{t-1}^{i_t},v_{t-1}^{i_t},x_t^{i_t};\gamma_t^{i_t})\leftarrow\scrO_t(x_{t-1}^{i_{t-1}},\ulcQ_t^{i_t},\olcQ_t^{i_t})\)}
        \Comment{noninitial stage step}
        \If{\(t<T\) \textbf{and} \(\gamma_t^{i_t}>\delta_t\)}
        \State{maintain \(\ulcQ_{t+1}^{i_{t+1}+1}(x)\leftarrow\ulcQ_{t+1}^{i_{t+1}}(x),\;\olcQ_{t+1}^{i_{t+1}+1}(x)\leftarrow\olcQ_{t+1}^{i_{t+1}}(x)\)}
        \State{set \(t\leftarrow t+1\)}
        \Else
        \State{update \(\ulcQ_{t-1}^{i_{t-1}+1}(x)\leftarrow \max\{\ulcQ_{t-1}^{i_{t-1}}(x),\calV_{t-1}^{i_t}(x)\}\)}
        \State{update \(\olcQ_{t-1}^{i_{t-1}+1}(x)\leftarrow \conv\{\olcQ_{t-1}^{i_{t-1}}(x),v_{t-1}^{i_t}+M_{t-1}\Vert{x-x_{t-1}^{i_{t-1}}}\Vert\}\)}
        \State{set \(t\leftarrow t-1\)}
        \EndIf
        \EndIf
        \EndLoop
    \end{algorithmic}
\end{algorithm}

\subsubsection{Nonconsecutive Dual Dynamic Programming}

We provide the description of NDDP in Algorithm~\ref{alg:NonconsecutiveDualDP}.
To start the algorithm, it requires an additionally chosen vector of approximation gaps \(\delta\coloneqq(\delta_t)_{t=1}^T\) such that \(\epsilon=\delta_1>\delta_2>\cdots>\delta_T=0\), compared with the CDDP algorithm.
These predetermined approximation gaps serve as criteria at stage \(t\) for deciding the next stage to be solved: the precedent stage \(t-1\) or the subsequent one \(t+1\).
If the algorithm decides to proceed to the subsequent stage \(t+1\), then the current state \(x_t\) is used; otherwise the generated linear cut \(\calV_{t-1}\) and over-estimate value \(v_{t-1}\) are used for updating the approximations.
The above argument of validness of approximations imply that \(\ulcQ_t^{i_t}(x)\le\calQ_t^\Reg(x)\le\olcQ_t^{i_t}(x)\) for all \(x\in\calX_t\) holds for any stage \(t\in\calT\) and any index \(i_t\in\bbN\).
Therefore, when NDDP terminates, the returned solution \(x_1^*\) is indeed \(\epsilon\)-optimal.

\subsection{Complexity Upper Bounds}
\label{subsec:ComplexityUB}

In this section, we provide a complexity analysis for the proposed CDDP and NDDP algorithms, which implies that both algorithms terminate in finite time for any finite \(M_2,\dots,M_T>0\) and \(\epsilon>0\).
Our goal is to derive an upper bound on the total number of subproblem oracle evaluations before the termination of the algorithm.
To begin with, let \(\calJ_t,t>1\), denote the set of pairs of indices \((i_{t-1},i_t)\) such that the noninitial stage subproblem oracle is evaluated at the \(i_t\)-th time at the state \(x_{t-1}^{i_{t-1}}\), i.e., \((\calV_{t-1}^{i_t},v_{t-1}^{i_t},x_t^{i_t};\gamma_t^{i_t})=\scrO_t(x_{t-1}^{i_{t-1}},\ulcQ_t^{i_t},\olcQ_t^{i_t})\).
For the CDDP algorithm, all stages share the same iteration index \(i_t=i\), so \(\calJ_t=\{(i,i):i\in\bbN\}\) for all \(t>1\).
We define the following sets of indices for each \(t\in\calT\setminus\{T\}\):
\begin{equation}
    \calI_t(\delta)\coloneqq\left\{i_t\in\bbN:\gamma_t^{i_t}>\delta_t\text{ and } \gamma_{t+1}^{i_{t+1}}\le\delta_{t+1},\ (i_t,i_{t+1})\in\calJ_{t+1}\right\}.
\end{equation}
Here, for NDDP algorithm, \(\delta\) is the given approximation gap vector, while for CDDP algorithm, \(\delta=(\delta_t)_{t=1}^T\) can be any vector satisfying \(\epsilon=\delta_1>\delta_2>\cdots>\delta_T=0\) for the purpose of analysis, since it is not required for the CDDP algorithm.
We adopt the convention that the gap for the last stage \(\gamma_T^{i_T}\equiv0\) such that \(i_{T-1}\in\calI_{T-1}(\delta)\) if and only if \(\gamma_{T-1}^{i_{T-1}}>\delta_{T-1}\) and \((i_{T-1},i_T)\in\calJ_T\).
In plain words, \(\calI_t(\delta)\) contains indices \(i_t\) such that the state \(x_t^{i_t}\) has an approximation gap greater than \(\delta_t\) at \(i_t\)-th evaluation, but the \(i_{t+1}\)-th evaluation of the stage-\((t+1)\) oracle finds a solution with approximation gap smaller than \(\delta_{t+1}\) for some \(i_{t+1}\in\bbN\).
Our definition of \(\calI_t(\delta)\) in the case of CDDP is analogous to the definition used for deterministic DDP algorithms for MSCO problems in~\cite{zhang2022stochastic}.
However, it is more flexible as it allows nonconsecutive visits of consecutive stages, which as we will see below, helps us to establish a better complexity upper bound on the NDDP algorithm.
\begin{lemma}\label{lemma:FiniteIndexSet}
    For stage any \(t<T\), suppose the state space \(\calX_t\subset\bbR^{d_t}\) is contained in a ball with diameter \(D_t>0\).
    Then,
    \begin{equation}\label{eq:FiniteIndexSet}
        \abs{\calI_t(\delta)}\le\left(1+\frac{2M_{t+1} D_t}{\delta_t-\delta_{t+1}}\right)^{d_t}.
    \end{equation}
\end{lemma}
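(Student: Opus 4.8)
The plan is to show that the states $x_t^{i_t}$ with $i_t \in \calI_t(\delta)$ must be pairwise separated by a distance depending on $\delta_t - \delta_{t+1}$ and $M_t$, and then invoke a standard packing bound in the ball of diameter $D_t$. The geometric heart of the matter is this: if $i_t \in \calI_t(\delta)$, then at iteration $i_t$ the approximation gap at $x_t^{i_t}$ was still large, namely $\gamma_t^{i_t} = \olcQ_t^{\cdot}(x_t^{i_t}) - \ulcQ_t^{\cdot}(x_t^{i_t}) > \delta_t$; but because $\gamma_{t+1}^{i_{t+1}} \le \delta_{t+1}$ for the paired index $(i_t, i_{t+1}) \in \calJ_{t+1}$, the subproblem oracle at stage $t+1$ returned a cut $\calV_t^{i_{t+1}}$ and an over-estimate $v_t^{i_{t+1}}$ whose gap at $x_t^{i_t}$ is controlled: $v_t^{i_{t+1}} - \calV_t^{i_{t+1}}(x_t^{i_t}) \le \gamma_{t+1}^{i_{t+1}} \le \delta_{t+1}$. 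After the backward step, $\ulcQ_t$ dominates $\calV_t^{i_{t+1}}$ and $\olcQ_t$ is dominated by the $M_t$-Lipschitz cap $v_t^{i_{t+1}} + M_t\norm{\,\cdot\, - x_t^{i_t}}$, so at \emph{any} later index $j_t$ the gap satisfies
\begin{equation}
    \gamma_t^{j_t} \;\le\; \olcQ_t^{\cdot}(x_t^{j_t}) - \ulcQ_t^{\cdot}(x_t^{j_t}) \;\le\; \big(v_t^{i_{t+1}} + M_t\norm{x_t^{j_t} - x_t^{i_t}}\big) - \calV_t^{i_{t+1}}(x_t^{j_t}).
\end{equation}

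The next step is to bound the right-hand side. Since $\calV_t^{i_{t+1}}$ is an $M_t$-Lipschitz affine function by Definition~\ref{def:NoninitialStageOracle}, we have $\calV_t^{i_{t+1}}(x_t^{j_t}) \ge \calV_t^{i_{t+1}}(x_t^{i_t}) - M_t\norm{x_t^{j_t} - x_t^{i_t}}$, whence the displayed bound becomes
\begin{equation}
    \gamma_t^{j_t} \;\le\; \big(v_t^{i_{t+1}} - \calV_t^{i_{t+1}}(x_t^{i_t})\big) + 2M_t\norm{x_t^{j_t} - x_t^{i_t}} \;\le\; \delta_{t+1} + 2M_t\norm{x_t^{j_t} - x_t^{i_t}}.
\end{equation}
Therefore, if $j_t \in \calI_t(\delta)$ is a second such index (say $j_t$ occurs after $i_t$), then $\gamma_t^{j_t} > \delta_t$, so combining gives $\delta_t < \delta_{t+1} + 2M_t\norm{x_t^{j_t} - x_t^{i_t}}$, i.e. $\norm{x_t^{j_t} - x_t^{i_t}} > (\delta_t - \delta_{t+1})/(2M_t)$. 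Thus the family $\{x_t^{i_t} : i_t \in \calI_t(\delta)\}$ is a set of points in a ball of diameter $D_t \subset \bbR^{d_t}$, pairwise separated by more than $r \coloneqq (\delta_t - \delta_{t+1})/(2M_t)$.

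Finally, a standard volumetric packing argument bounds the number of such points: balls of radius $r/2$ centered at these points are disjoint and all contained in a ball of radius $(D_t + r)/2$, so their count is at most $\big((D_t + r)/r\big)^{d_t} = \big(1 + D_t/r\big)^{d_t} = \big(1 + 2M_t D_t/(\delta_t - \delta_{t+1})\big)^{d_t}$, which is exactly~\eqref{eq:FiniteIndexSet}. I expect the main obstacle to be the bookkeeping in the first step — carefully verifying that once an index in $\calI_t(\delta)$ has triggered a backward update at stage $t$, the resulting cut and over-estimate cap are genuinely in force (as valid under- and over-approximations) at every \emph{subsequent} index $j_t$, so that the monotone refinement of $\ulcQ_t$ and $\olcQ_t$ can be used to transfer the gap bound from $x_t^{i_t}$ to $x_t^{j_t}$; this requires tracking the index updates in both Algorithm~\ref{alg:SequentialDualDP} and Algorithm~\ref{alg:NonsequentialDualDP} and using the validity/monotonicity established in Section~\ref{subsec:DualDPAlgorithms}. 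One should also handle the boundary case $t = T-1$ via the stated convention $\gamma_T^{i_T} \equiv 0$, which makes the argument go through unchanged with $\delta_T = 0$.
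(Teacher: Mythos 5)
Your proposal is correct and follows essentially the same route as the paper: establish that any two states indexed by $\calI_t(\delta)$ are separated by more than $(\delta_t-\delta_{t+1})/(2M_t)$ (the paper does this by contradiction via the $M_t$-Lipschitz continuity of both approximations, which is exactly your cut-plus-cap argument phrased through Proposition~\ref{prop:RecursionApproximation}), and then conclude with the identical volumetric packing bound. The monotonicity bookkeeping you flag as the main obstacle is indeed the only subtlety, and the paper handles it the same way, implicitly through the validity of the updated approximations at all later indices.
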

\begin{proof}
    We claim that for any \(j,k\in\calI_t\), \(j\neq k\), it holds that \(\nVert{x_t^j-x_t^k}>(\delta_t-\delta_{t+1})/(2M_{t+1})\).
    Assume for contradiction that \(\nVert{x_t^j-x_t^k}\le(\delta_t-\delta_{t+1})/(2M_{t+1})\) for some \(j<k\), \(j,k\in\calI_t(\delta)\).
    By definition of \(\calI_t(\delta)\), the \((t+1)\)-th subproblem oracle is evaluated at the state \(x_t^j\), and in both the CDDP and the NDDP algorithms, the approximations \(\ulcQ_t^j\) and \(\olcQ_t^j\) are updated since \(\gamma_{t+1}^{i_{t+1}}\le\delta_{t+1}\) for some \(i_{t+1}\in\bbN\) with \((j,i_{t+1})\in\calJ_{t+1}\).
    Then by Definition~\ref{def:NoninitialStageOracle} of the noninitial stage subproblem oracle, we have \(\olcQ_t^{j+1}(x_t^j)-\ulcQ_t^{j+1}(x_t^j)\le \delta_{t+1}\).
    Note that for any point \(x\in\calX_t\) with \(\nVert{x-x_t^j}\le(\delta_t-\delta_{t+1})/(2M_{t+1})\), we have \(\olcQ_t^{j+1}(x)-\ulcQ_t^{j+1}(x)\le \delta_{t}\) because of the \(M_{t+1}\)-Lipschitz continuity of the approximations.
    Since \(\olcQ_t^{k}(x)\le\olcQ_t^{j+1}(x)\) and \(\ulcQ_t^{k}(x)\ge\ulcQ_t^{j+1}(x)\), by setting \(x=x_t^k\), we see a contradiction with the assumption that \(k\in\calI_t(\delta)\), which proves the claim.

    To ease the notation, let \(r_t\coloneqq(\delta_t-\delta_{t+1})/(2M_{t+1})\) denote the radius of the \(d_t\)-dimensional balls \(\calB^{d_t}(x_t^j;r_t)\) centered at \(x_t^j\) for \(j\in\calI_t(\delta)\), and let \(\calB_t\supseteq\calX_t\) denote a ball with diameter \(D_t\).
    From the above claim, we know that \(x_t^k\notin\calB^{d_t}(x_t^j;r_t)\) for any \(j,k\in\calI_t(\delta)\) with \(j<k\).
    In other words, the smaller balls \(\calB^{d_t}(x_t^j;r_t/2)\) are disjoint.
    Meanwhile, note that each of these smaller balls satisfies \(\calB^{d_t}(x_t^j;r_t/2)\subset\calB_t+\calB^{d_t}(0;r_t/2)\) (the Minkowski sum in the Euclidean space \(\bbR^{d_t}\)).
    Therefore, the volumes satisfy the relation
    \begin{equation*}
     \Vol\left(\textstyle\bigcup_{j\in\calI_t(\delta)}\calB^{d_t}(x_t^j;r_t/2)\right)
        =\abs{\calI_t(\delta)}\cdot\Vol\calB^{d_t}(0;r_t/2)
        \le\Vol\left(\calB_t+\calB^{d_t}(0;r_t/2)\right),
    \end{equation*}
    which implies that
    \begin{equation*}
        \abs{\calI_t(\delta)}\le\frac{\Vol(\calB_t+\calB^{d_t}(0;r_t/2))}{\Vol\calB^{d_t}(0;r_t/2)}
        =\left(\frac{D_t/2+r_t/2}{r_t/2}\right)^{d_t}
        =\left(1+\frac{2M_{t+1}D_t}{\delta_t-\delta_{t+1}}\right)^{d_t}.
    \end{equation*}
\end{proof}

We prove the following complexity upper bounds for the CDDP algorithm (Theorem \ref{thm:CDDPComplexityBound}) and the NDDP algorithm (Theorem \ref{thm:NDDPComplexityBound}).
\begin{theorem}\label{thm:CDDPComplexityBound}
    Suppose the state spaces \(\calX_t\subset\bbR^{d_t}\) are contained in balls, each with diameter \(D_t>0\).
    Then for the CDDP algorithm (Algorithm~\ref{alg:ConsecutiveDualDP}), the total number of subproblem oracle evaluations \(\NumEval_\CDDP\) before termination is bounded by
    \begin{equation*}
        \NumEval_\CDDP\le 1+T\cdot\inf_\delta\left\{\sum_{t=1}^{T-1}\left(1+\frac{2M_{t+1}D_t}{\delta_t-\delta_{t+1}}\right)^{d_t}:\epsilon=\delta_1>\delta_2>\cdots>\delta_T=0\right\}.
    \end{equation*}
\end{theorem}
\begin{proof}
    We prove by showing that for any approximation gap vector \(\delta\) satisfying \(\epsilon=\delta_1>\delta_2>\cdots>\delta_T=0\), the largest iteration index \(i\) is bounded by
    \begin{equation}\label{eq:CDDPIterationBound}
        i\le\sum_{t=1}^{T-1}\left(1+\frac{2M_{t+1}D_t}{\delta_t-\delta_{t+1}}\right)^{d_t}.
    \end{equation}
    We claim that each iteration \(i\in\bbN\) must lie in either of the following two cases:
    \begin{compactenum}
        \item the initial stage step has \(\gamma_1^{i}\le\epsilon\); or
        \item the \(i\)-th forward step is in the index set \(i\in\calI_t(\delta)\) for some stage \(t<T\).
    \end{compactenum}
    To see the claim, suppose that the iteration \(i\in\bbN\) is not in the first case. 
    Then we have \(\gamma_1^i>\epsilon\) and by convention \(\gamma_T^i=0\le\delta_T\). 
    Therefore, there exists a stage \(t<T\) such that \(\gamma_t^i>\delta_t\) while \(\gamma_{t+1}^i\le\delta_{t+1}\), which is the second case.
    Note that when the first case happens, we have \(\UB-\LB\le\gamma_1^i\le\epsilon\) and thus the CDDP algorithm terminates.
    By Lemma~\ref{lemma:FiniteIndexSet}, the second case can only happen at most \(\sum_{t=1}^{T-1}\abs{\calI_t(\delta)}\) times, proving the bound~\eqref{eq:CDDPIterationBound}.
    The theorem then follows from the fact that in each CDDP iteration, the subproblem oracle is evaluated \(T+1\) times with the additional evaluation of the initial stage subproblem oracle for checking the termination criterion.
\end{proof}

\begin{remark}
    When \(d_t=d\), \(M_t=M\), and \(D_t=D\) for all \(t=1,\dots,T-1\), the bound in Theorem~\ref{thm:CDDPComplexityBound} can be simplified nicely.
    Note that the function
    \[
        \sum_{t=1}^{T-1}\left(1+\frac{2MD}{\epsilon\cdot\sigma_t}\right)^d
    \]
    is strictly convex and symmetric under permutation on the simplex \(\Delta^{T-1}:=\{\sigma=(\sigma_1,\dots,\sigma_{T-1})\in\bbR^{T-1}:\sum_{t=1}^{T-1}\sigma_t=1\}\).
    Therefore, it has a unique optimal solution \(\sigma=(\frac{1}{T-1},\dots,\frac{1}{T-1})\), which implies the infimum in~\eqref{eq:CDDPIterationBound} is attained by \(\delta_t=\frac{T-t}{T-1}\epsilon\), \(t\in\calT\).
    In this case, the CDDP complexity upper bound in Theorem~\ref{thm:CDDPComplexityBound} becomes
    \[
        i\le 1+T(T-1)\left(1+\frac{2MD(T-1)}{\epsilon}\right)^d.
    \]
\end{remark}

\begin{theorem}\label{thm:NDDPComplexityBound}
    Suppose the state spaces \(\calX_t\subset\bbR^{d_t}\) are contained in balls, each with diameter \(D_t>0\).
    Then, for the NDDP algorithm (Algorithm~\ref{alg:NonconsecutiveDualDP}) with the predetermined approximation gap vector \((\delta_t)_{t=1}^{T}\) satisfying \(\epsilon=\delta_1>\delta_2>\cdots>\delta_T=0\), 
    the total number of subproblem oracle evaluations \(\NumEval_\NDDP\) before termination is bounded by
    \begin{equation*}
        \NumEval_\NDDP\le 1+2\cdot\sum_{t=1}^{T-1}\left(1+\frac{2M_{t+1}D_t}{\delta_t-\delta_{t+1}}\right)^{d_t}.
    \end{equation*}
\end{theorem}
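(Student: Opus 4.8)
The plan is to count the forward moves ($t\mapsto t+1$) and the backward moves ($t\mapsto t-1$) performed by Algorithm~\ref{alg:NonsequentialDualDP} separately, to relate these two counts by a parity argument on the sequence of stages visited, and then to bound the number of forward moves using the index sets $\calI_t(\delta)$ together with Lemma~\ref{lemma:FiniteIndexSet}.

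First I would set up the bookkeeping. Each pass through the while-loop performs exactly one subproblem oracle evaluation and then executes exactly one of the following: a break (possible only in the $t=1$ branch, when $\UB-\LB\le\epsilon$), a forward move $t\leftarrow t+1$, or a backward move $t\leftarrow t-1$. Hence, letting $F$ and $B$ denote the total numbers of forward and backward moves, the total number of oracle evaluations is $\NumEval_\NDDP=F+B+1$. Next I would view the sequence of stage indices at which the oracle is evaluated as a walk on $\{1,\dots,T\}$ that changes by $\pm 1$ at each step: it starts at stage $1$ and, since a break occurs only at stage $1$, it ends at stage $1$, so its net displacement is zero. Therefore $F=B$, and $\NumEval_\NDDP=2F+1$.

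The core of the argument is to bound $F$ by $\sum_{t=1}^{T-1}\abs{\calI_t(\delta)}$ by charging each forward move out of stage $t$ to a distinct element of $\calI_t(\delta)$. Consider a forward move from stage $t$ to stage $t+1$ taken at the $i_t$-th evaluation of the stage-$t$ oracle. For $t=1$, not breaking means $\UB-\LB>\epsilon$, and since $\UB-\LB\le\olcQ_1^{i_1}(x_1^{i_1})-\ulcQ_1^{i_1}(x_1^{i_1})=\gamma_1^{i_1}$ one gets $\gamma_1^{i_1}>\epsilon=\delta_1$; for $1<t<T$ the forward branch is entered precisely when $\gamma_t^{i_t}>\delta_t$. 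So in every case $\gamma_t^{i_t}>\delta_t$. Because a break can happen only at stage $1$, after such a forward move the algorithm must eventually perform a matching backward move from stage $t+1$ back to stage $t$; let $i_{t+1}$ be the evaluation index of the stage-$(t+1)$ oracle at that moment, so that $(i_t,i_{t+1})\in\calJ_{t+1}$. That backward move is taken either because $t+1=T$, in which case $\gamma_{t+1}^{i_{t+1}}=\gamma_T^{i_T}\equiv 0\le\delta_{t+1}$, or because $\gamma_{t+1}^{i_{t+1}}\le\delta_{t+1}$; either way $i_t\in\calI_t(\delta)$. Finally, between two successive forward moves out of stage $t$ the algorithm must first come back to stage $t$, at which point the while-loop header increments $i_t$; hence distinct forward moves out of stage $t$ carry distinct indices $i_t$, and the number of such moves is at most $\abs{\calI_t(\delta)}$. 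Summing over $t$ and combining with the previous paragraph gives $\NumEval_\NDDP=2F+1\le 1+2\sum_{t=1}^{T-1}\abs{\calI_t(\delta)}$, and plugging in Lemma~\ref{lemma:FiniteIndexSet} yields the stated bound; the same estimate also shows the algorithm terminates.

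I expect the third step to be the main obstacle: one has to follow the index-incrementing conventions of Algorithm~\ref{alg:NonsequentialDualDP} precisely enough to see that the assignment of each forward move out of stage $t$ to an index $i_t\in\calI_t(\delta)$ is well defined and injective, and one has to use that a break occurs only at stage $1$ to guarantee that every forward move is eventually ``closed'' by a backward move whose gap is controlled (doing this carefully on the partial walk also removes the apparent circularity with termination). The parity observation ($F=B$) and the volumetric estimate of Lemma~\ref{lemma:FiniteIndexSet} are then routine.
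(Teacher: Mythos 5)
Your proposal is correct and follows essentially the same route as the paper: the paper charges each backward move $t\to t-1$ directly to an element of $\calI_{t-1}(\delta)$ (where both $\gamma_{t}^{i_t}\le\delta_t$ and $\gamma_{t-1}^{i_{t-1}}>\delta_{t-1}$ hold at that moment), invokes Lemma~\ref{lemma:FiniteIndexSet}, and uses the same ``two moves per going-back step plus one final evaluation'' count that your identity $\NumEval_\NDDP=F+B+1$ with $F=B$ makes precise. Your variant of charging forward moves instead is equivalent but slightly less direct, since it needs the eventual-closure argument you flag; counting backward moves as the paper does sidesteps that circularity because each backward move certifies its index-set membership at the instant it occurs.
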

\begin{proof}
    For the NDDP algorithm, each time when it decides to go back to the precedent stage \(t\leftarrow t-1\), we must have \(\gamma_t^{i_t}\le\delta_t\) while \(\gamma_{t-1}^{i_{t-1}}>\delta_{t-1}\) for some \((i_{t-1},i_t)\in\calJ_t\).
    In this case, we have by definition that \(i_{t-1}\in\calI_{t-1}(\delta)\).
    By Lemma~\ref{lemma:FiniteIndexSet}, such ``going back'' steps can only happen at most \(\sum_{t=1}^{T-1}\abs{\calI_t(\delta)}\) times.
    The total number of evaluations is then bounded by two times the number of ``going back'' steps due to the corresponding ``going forward'' steps, plus an additional evaluation of the initial stage oracle for checking the termination criterion.
\end{proof}

We compare the complexity bounds of the two algorithms.
If we fix the approximation gap vector \(\delta\) to be \(\delta=(1,\frac{T-2}{T-1},\dots,\frac{1}{T-1})\cdot\epsilon\) in both bounds from Theorem~\ref{thm:CDDPComplexityBound} and Theorem~\ref{thm:NDDPComplexityBound}, then we can see that the bound on \(\NumEval_\CDDP\) is \(T/2\) times the bound on \(\NumEval_\NDDP\).
Both bounds are polynomial in \(T\) for fixed dimensions \(d_t\).
Moreover, in the special case of MSCO, we can compare both upper bounds against those known in the literature~\cite{lan2022complexity,zhang2022stochastic}. 
In particular, the bound on \(\NumEval_\CDDP\) is the same as \(T\) times the bound on the number of iterations needed for the deterministic DDP algorithm to converge in~\cite{zhang2022stochastic} (with the same dependency on the diameter \(D_t\) and Lipschitz constant \(M_{t+1}\)), which is futher equivalent to the bound in~\cite[Theorem 2]{lan2022complexity} when the MSCO is not discounted.
This matches our intuition that CDDP is a natural extension of the deterministic DDP algorithm to DR-MCO problems, in which we executes \(T\) subproblem oracles consecutively within each iteration.
Our bound on \(\NumEval_\NDDP\) then shows an improvement over the best known bound (in terms of \(T\)) in the literature for MSCO.
As we will see below, it is in fact optimal if we allow the target optimality gap to grow with \(T\), which is reasonable in many practical problems.

Recall that the local cost functions \(f_1,\dots,f_T\) are assumed to be nonnegative.
Given any \(\alpha\ge0\), we say \(x_1^*\in\calX_1\) is an \(\alpha\)-relative optimal solution if 
\begin{equation*}f_1(x_0,x_1^*;\xi_1)+Q_1(x_1^*)\le(1+\alpha)\left(\min_{x_1\in\calX_1}f_1(x_0,x_1;\xi_1)+\calQ_1(x_1)\right).
\end{equation*}
An \(\epsilon\)-optimal first stage solution is \(\alpha\)-relative optimal if 
\[\epsilon\le\alpha\left(\min_{x_1\in\calX_1}f_1(x_0,x_1;\xi_1)+\calQ_1(x_1)\right).
\]
We provide below an important simplification of the above complexity bounds when considering relative optimality gaps.
\begin{corollary}\label{cor:DDPComplexityRelativeGap}
    Suppose that all the state spaces have the same dimension \(d_t=d\) and are bounded by a common diameter \(D_t\le D\), and let \(M\coloneqq\max\{M_t:t\in\calT'\}\).
    If for each stage \(t\in\calT\), the local cost functions are uniformly bounded from below by \(C>0\) for all feasible solutions \(x_{t-1},x_t\) and uncertainty outcomes \(\xi_t\), i.e., \(f_t(x_{t-1},x_t;\xi_t)\ge C\), 
    then the total number of subproblem oracle evaluations before achieving an \(\alpha\)-relative optimal solution \(x_1^*\) for CDDP and NDDP are upper bounded respectively by
    \begin{align*}
        &\NumEval_\CDDP\le 1+T(T-1)\left(1+\frac{2MD}{\alpha C}\right)^d,\\
        &\NumEval_\NDDP\le 1+2(T-1)\left(1+\frac{2MD}{\alpha C}\right)^d.
    \end{align*}
\end{corollary}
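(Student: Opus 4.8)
The plan is to derive the corollary as a specialization of Theorems~\ref{thm:SDDPComplexityBound} and~\ref{thm:NDDPComplexityBound}: convert the relative accuracy $\alpha$ into an absolute optimality tolerance $\epsilon$ with the help of a uniform lower bound on the optimal value, and then pick the approximation‑gap vector $\delta$ so that its successive differences are all equal. We may assume $T\ge2$, the case $T=1$ being trivial (both bounds then read $\NumEval\le1$). First I would bound from below the optimal value $v^\Reg\coloneqq\min_{(x_1,y_1)\in\calF_1}\{f_1(x_0,y_1,x_1)+\calQ_1^\Reg(x_1)\}$ of the regularization~\eqref{eq:RecursionRegularization}. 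This follows from a downward induction showing $\calQ_t^\Reg(x)\ge(T-t)C$ for all $x\in\calX_t$: the base case $\calQ_T^\Reg\equiv0$ is immediate, and in the step the penalty term $M_t\norm{x_t-z_n}$ in~\eqref{eq:ValueFunctionRegularization} is nonnegative, $f_n\ge C$, and $\calQ_{t+1}^\Reg(x_n)\ge(T-t-1)C$ by hypothesis, so $Q_n^\Reg(x_t)\ge(T-t)C$ for every $n\in\calN(t+1)$; since $\calQ_t^\Reg(x_t)$ is a convex combination of the $Q_n^\Reg(x_t)$, each of which is at least $(T-t)C$, it too is at least $(T-t)C$. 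Combined with $f_1\ge C$, this gives $v^\Reg\ge TC$.

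Next I would run the algorithm with absolute tolerance $\epsilon\coloneqq\alpha TC>0$. From the validity of the approximations (established in the correctness discussion preceding Section~\ref{subsec:ComplexityUB}) we have $\LB\le v^\Reg\le\UB$ throughout, so at termination $\UB-v^\Reg\le\UB-\LB\le\epsilon=\alpha TC\le\alpha v^\Reg$, i.e.\ $\UB\le(1+\alpha)v^\Reg$; hence the returned $(x_1^*,y_1^*)$ is $\alpha$-relative optimal, and it suffices to count the oracle evaluations needed to reach $\UB-\LB\le\epsilon$.

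I would then invoke Theorems~\ref{thm:SDDPComplexityBound} and~\ref{thm:NDDPComplexityBound} with the choice $\delta_t\coloneqq\epsilon\,\tfrac{T-t}{T-1}$, $t=1,\dots,T$, which is admissible since $\epsilon=\delta_1>\delta_2>\cdots>\delta_T=0$, and whose successive differences are constant, $\delta_t-\delta_{t+1}=\tfrac{\epsilon}{T-1}=\tfrac{\alpha TC}{T-1}$ (for Seq-DDP this $\delta$ is only an analysis device, while NDDP is run with it). Using $d_t=d$, $M_t\le M$, $D_t\le D$ and $\tfrac{T-1}{T}\le1$, every summand in the two theorems satisfies
\[
\left(1+\frac{2M_tD_t}{\delta_t-\delta_{t+1}}\right)^{d_t}
\le\left(1+\frac{2MD\,(T-1)}{\alpha TC}\right)^{d}
\le\left(1+\frac{2MD}{\alpha C}\right)^{d}.
\]
Summing the $T-1$ such terms and using Theorem~\ref{thm:SDDPComplexityBound}, whose bound carries the extra factor $T$, gives $\NumEval_\SDDP\le1+T(T-1)\bigl(1+\tfrac{2MD}{\alpha C}\bigr)^{d}$; using Theorem~\ref{thm:NDDPComplexityBound}, whose bound carries the extra factor $2$, gives $\NumEval_\NDDP\le1+2(T-1)\bigl(1+\tfrac{2MD}{\alpha C}\bigr)^{d}$.

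All of this is routine. The only point that deserves a little care — and which I do not regard as a genuine obstacle — is the passage from a relative to an absolute guarantee: it is precisely the stagewise strict positivity $f_n\ge C>0$ that forces $v^\Reg\ge TC$, and this in turn is what lets the single fixed tolerance $\epsilon=\alpha TC$ certify an $\alpha$-relative optimal first‑stage solution.
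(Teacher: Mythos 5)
Your proof is correct and follows essentially the same route as the paper's: convert the relative tolerance into an absolute one via the lower bound on the optimal value implied by $f_n\ge C$, then apply Theorems~\ref{thm:SDDPComplexityBound} and~\ref{thm:NDDPComplexityBound} with an equispaced gap vector (the paper takes $\epsilon=\alpha C(T-1)$ and $\delta_t=(T-t)\alpha C$, so $\delta_t-\delta_{t+1}=\alpha C$, whereas you take $\epsilon=\alpha TC$ and the same final bound follows from $(T-1)/T\le1$). Your explicit induction showing $\calQ_t^\Reg\ge(T-t)C$ and hence $v^\Reg\ge TC$ is a step the paper leaves implicit, so your write-up is, if anything, slightly more complete.
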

\begin{proof}
    By assumption, \(\min_{x_1\in\calX_1}f_1(x_0,x_1;\xi_1)+\calQ_1(x_1)\ge CT\).
    Thus the assertion is proved if \(x_1^*\) is \(\epsilon\)-optimal with \(\epsilon=\alpha C(T-1)<\alpha CT\).    
    We can apply Theorems~\ref{thm:CDDPComplexityBound} and~\ref{thm:NDDPComplexityBound} by setting \(\delta_t=(T-t)\alpha C\).
\end{proof}

Corollary~\ref{cor:DDPComplexityRelativeGap} shows that for problems that have strictly positive cost in each stage, the proposed complexity bounds for an \(\alpha\)-relative optimal solution grow at most \emph{quadratically} for CDDP and \emph{linearly} for NDDP with respect to the number of stages \(T\). 
At the same time, they both grow \emph{exponentially} with the dimension \(d\).
In what follows, we show that this exponential dependence on \(d\) is nearly tight by providing a matching lower bound.

\subsection{Complexity Lower Bound}
\label{sec:ComplexityLB}

Note that if we take \(\delta_t=\epsilon(T-t)/(T-1)\) for \(t\in\calT\), then the complexity upper bounds in Theorems~\ref{thm:CDDPComplexityBound} and~\ref{thm:NDDPComplexityBound} depend on terms \((T-1)^{d_t+1}\) or \((T-1)^{d_t+2}\), where \(d_t\) is the state space dimension of stage \(t<T\).
It is natural to ask whether it is possible for either algorithm to achieve an \(\epsilon\)-optimal solution without the exponential dependence on the state space dimensions as in Corollary~\ref{cor:DDPComplexityRelativeGap}.
We construct a class of convex problems to show that this is indeed \emph{impossible}.

Given a \(d\)-sphere \(\calS^d(r)=\{x\in\bbR^{d+1}:\nVert{x}_2=r\}\) with radius \(r>0\), a spherical cap with depth \(\theta>0\) centered at a point \(x\in \calS^d(r)\) is the set \(\calS^d_\theta(r,x)\coloneqq\{y\in \calS^d(r):\bangle{y-x}{x}\ge -\theta r\}\).
Our construction is based on the following technical lemma, where we use \(\Gamma(\cdot)\) to denote the gamma function. 
\begin{lemma}[{\cite[Lemmas~8~and~9]{zhang2022stochastic}}]\label{lemma:ComplexitySphericalCap}
    Given a \(d\)-sphere \(\calS^d(r),d\ge 2\) and depth \(\theta<(1-\frac{\sqrt{2}}{2})r\), there exists a finite set of points \(\calW\) with 
    \[
        \abs{\calW}\ge\frac{(d^2-1)\sqrt{\pi}}{d}\frac{\Gamma(d/2+1)}{\Gamma(d/2+3/2)}\left(\frac{r}{2\theta}\right)^{(d-1)/2}, 
    \]
    such that for any \(w\in \calW\), \(\calS^d_\theta(r,w)\cap \calW=\{w\}\). 
    Moreover, given a positive constant \(l>0\), any function \(F^\calW:\calW\to(\frac{1}{2}l\theta,l\theta)\) can be extended to an \(l\)-Lipschitz continuous convex function \(F:\calB^{d+1}(r)\to\bbR\), such that \(F\) is differentiable at any point of \(\calW\), and the approximate functions 
    \begin{align*}
        \ulF_w(x)&\coloneqq\max\{F(v)+\innerprod{\nabla F(v)}{x-v}:v\in\calW,v\neq w\},\\
        \olF_w(x)&\coloneqq\conv\{F(v)+l\nVert{x-v}:v\in\calW,v\neq w\},
    \end{align*}
    satisfy \(\ulF_w(w)<0\) and \(\olF_w(w)>\frac{3}{2}l\theta\).
\end{lemma}

We next construct a class of deterministic DR-MCO problems using such convex functions, with the following parameters:
\(T\ge2\) as the number of stages,
\(L>0\) as an upper bound on Lipschitz constants,
\(d\ge 3\) as the state space dimension,
\(D=2r>0\) as the state space diameter,
and \(\epsilon>0\) as the target optimality gap.
Set \(\epsilon_t=\epsilon/(T-1)\).
Construct sets of points \(\calW_t\coloneqq\{w_{t,k}\}_{k=1}^{K_t}\), where \(\theta_t=\epsilon_t/L\) for \(t\in\calT'\).
Let \(F_t(x)=F^{\calW_t}(x)\) be an \(L\)-Lipschitz continuous convex function on \(\calB^{d}(r)\) constructed in Lemma~\ref{lemma:ComplexitySphericalCap} with \(F_t(w_{t,k})\in(\epsilon_t/2,\epsilon_t)\) for each \(w_{t,k}\in\calW_t\). 
And we denote \(F_1(x)\equiv0\).
Our DR-MCO problems for the complexity lower bound are defined by the following recursions for \(t=T,\dots,2\)
\begin{equation}\label{eq:ComplexityExample}
    \calQ_{t-1}(x_{t-1})=\min_{x_{t}\in\calB^d(r)}\left\{F_t(x_{t-1})+\calQ_{t}(x_{t})\right\},
\end{equation}
where \(\calQ_T(x)\equiv0\), and the first stage problem is defined as \(\min_{x_1\in\calB^d(r)}\calQ_1(x_1)\).
We assume that the dual bounds satisfy \(M_t\ge L\), \(t\in\calT'\) for the exactness guaranteed by Proposition~\ref{prop:LipschitzRegularizationExactness}.
To describe the asymptotic behavior of a lower bound, we use the \(\Omega\)-notation, i.e., for functions \(a,b:\bbZ\to\bbR\), \(a(s)=\Omega(b(s))\) if \(\liminf_{s\to\infty}\abs{\frac{a(s)}{b(s)}}>0\).
\begin{theorem}\label{thm:ComplexityExample}
    For the problem~\eqref{eq:ComplexityExample}, there exist single stage subproblem oracles such that the number of evaluations \(\NumEval\) for either of Algorithms~\ref{alg:ConsecutiveDualDP} and~\ref{alg:NonconsecutiveDualDP} before termination has the following lower bound
    \[
        \NumEval\ge
    \frac{d}{d-1}\sqrt{\frac{\pi}{2}(d^2-4)}\left(\frac{DL(T-1)}{2\epsilon}\right)^{d/2-1}
    =\varOmega(T^{d/2-1})\text{ as } T\to\infty.
    \]
\end{theorem}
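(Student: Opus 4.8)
The plan is to isolate one intermediate stage $s$ with $1<s<T$ and to show that the stage-$s$ subproblem oracle must be evaluated at least $K_s := \abs{\calW_s}$ times before termination; Lemma~\ref{lemma:ComplexitySphericalCap} then converts $K_s$ into the stated quantity. Concretely, apply that lemma to the sphere $\calS^{d-1}(r)$ (its hypothesis $d-1\ge 2$ holds since $d\ge 3$) with depth $\theta_s = \epsilon_s/l_s$; using $(d-1)^2-1 = d(d-2)$, $\Gamma((d+1)/2)=\Gamma(d/2+1/2)$, together with $r = D/2$, $\epsilon_s = 2\epsilon/(T-2)$ and $l_s\ge L/2$, one gets $r/(2\theta_s) = D l_s (T-2)/(8\epsilon) \ge DL(T-2)/(16\epsilon)$, so $K_s$ is at least the displayed expression, which is $\Theta(T^{(d-2)/2}) = \Theta(T^{d/2-1})$ as $T\to\infty$.

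First I would unwind the instance. Because $F_t(x_t)$ does not involve the uncertainty, the recursion~\eqref{eq:ComplexityExample} collapses stagewise to $\calQ_t = F_t + c_t$, where $c_{T-1}=0$ and $c_{t-1}=c_t+\max_k v_{t,k}$; the collapse uses that $l_{t-1}$ strictly exceeds the Lipschitz modulus $l_t$ of $F_t$, so that the inner infimal convolution of $F_t$ with $l_{t-1}\norm{\,\cdot\,}$ over $\calB^d(r)$ reproduces $F_t$, and that the extreme points of $\conv\calW_t$ are the $w_{t,k}$. In particular each $\calQ_t$ is $l_t$-Lipschitz with $l_t < L \le M_t$, so Proposition~\ref{prop:RegularizationValueFunction} makes the regularization exact, $\calQ_t^{\Reg}=\calQ_t$, and the algorithms are genuinely solving~\eqref{eq:ComplexityExample}, whose optimal value is $c_1 = \sum_{t=2}^{T-1}\max_k v_{t,k} \in (\epsilon, 2\epsilon)$. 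I would also retain the two features of $F_t$ recorded just before the theorem: $F_t(w_{t,k}) = v_{t,k} \in (\epsilon_t/2, \epsilon_t)$ with unique subgradient $(l_t/r)w_{t,k}$ there; and, since Lemma~\ref{lemma:ComplexitySphericalCap} forces $w_{t,k'}\notin\calS^{d-1}_{\theta_t}(r,w_{t,k})$ for $k'\ne k$ and hence $\dist(w_{t,k},\conv_{k'\ne k}\{w_{t,k'}\})\ge\theta_t$, any convex $M_t$-Lipschitz over-estimate of $\calQ_t$ assembled only from cones anchored outside the cap $\calS^{d-1}_{\theta_t}(r,w_{t,k})$ overshoots $\calQ_t(w_{t,k})$ by more than $M_t\theta_t - \epsilon_t \ge \epsilon_t/2$ (here $M_t\ge L\ge l_t$ and $l_t\theta_t = \epsilon_t$).

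The core is a charging argument at stage $s$. By stagewise independence, every stage-$(t-1)$ subproblem sees all of $\conv\calW_t$ as its uncertainty set, and the responder's optimal reaction to the vertex $w_{t,k}$ is to move to $w_{t,k}$ itself (again since $l_{t-1}>l_t$); hence the over-estimate returned at stage $t-1$ is at least $\min_x[\,l_{t-1}\norm{x-w_{t,k}}+\olcQ_t(x)\,]$ for every $k$, and by the previous paragraph this term exceeds its ideal value (obtained with $\calQ_t$ in place of $\olcQ_t$) by a fixed multiple of $\epsilon_t$ until some forward state enters the cap $\calS^{d-1}_{\theta_t}(r,w_{t,k})$. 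Because the error neighbourhoods of distinct unresolved centres are pairwise disjoint, this excess is not cancelled by the intervening maximin layers, so $\olcQ_1(0)$ --- and therefore $\UB-\LB$ --- cannot fall to $\epsilon$ while any stage-$s$ centre is unresolved. On the other hand a single stage-$s$ evaluation yields exactly one forward state, which by the cap separation lies in at most one cap $\calS^{d-1}_{\theta_s}(r,w_{s,k})$ and so resolves at most one centre (equivalently, a generated cut is $O(\epsilon_s)$-tight at $w_{s,k}$ only if its subgradient is forced to be $\approx(l_s/r)w_{s,k}$, pinning the anchoring state to that one cap). Combining the two observations, at least $K_s$ distinct stage-$s$ oracle evaluations must occur before termination, i.e. $\NumEval\ge K_s$, and the theorem follows from the first paragraph.

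The step I expect to be the real obstacle is making the first half of the charging argument quantitative: showing rigorously that an $\Omega(\epsilon_s)$ over-approximation gap at an unresolved stage-$s$ centre propagates without dissipation through the $T-s$ intervening maximin layers --- with the $M_t$-Lipschitz cones and cuts anchored at whatever forward states the algorithm happens to produce --- into an $\Omega(\epsilon)$ gap $\UB-\LB$ at the first stage, and that this reasoning is insensitive to the sequential versus nonsequential stage-selection rule and to any admissible choice of the approximation-gap vector $\delta$. The remaining ingredients --- the stagewise collapse of $\calQ_t$, the exactness of the regularization, and the final arithmetic turning $K_s$ into the Gamma-function bound with $T$-exponent $d/2-1$ --- are routine.
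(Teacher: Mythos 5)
Your setup---the stagewise collapse $\calQ_t = F_t + c_{t+1}$, the exactness of the regularization via Proposition~\ref{prop:RegularizationValueFunction}, the cap-separation facts about $F_t$, and the final arithmetic converting a count $K_t$ into the Gamma-function bound through Lemma~\ref{lemma:ComplexitySphericalCap}---all match the paper. The genuine gap is in the core charging argument, and the step you flag as ``the real obstacle'' is not merely hard to make quantitative: it is false as stated. An unresolved centre $w_{s,k}$ at a single fixed stage $s$ forces an over-approximation excess of order $\epsilon_s/2 = \epsilon/(T-2)$ at that stage, and even if this excess propagates to the first stage with no dissipation whatsoever, it yields only $\UB-\LB \gtrsim \epsilon/(T-2)$, which is far below the termination threshold $\epsilon$ once $T$ is large. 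So the algorithm \emph{can} terminate while stage-$s$ centres remain unresolved, and your claimed conclusion $\NumEval\ge K_s$ for a \emph{fixed} stage $s$ does not follow (it is also stronger than what the theorem requires).

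The paper's proof repairs exactly this point by aggregating across stages instead of isolating one. It shows that if $i_t<K_t$ for \emph{every} $t\in\{2,\dots,T-1\}$, then each stage contributes an extra $\epsilon_t/2$ to the gap via the telescoping inequality $\bar{c}_t^{i_t}-\ubar{c}_t^{i_t} > \epsilon_t/2 + \bar{c}_{t+1}^{i_{t+1}}-\ubar{c}_{t+1}^{i_{t+1}}$, and summing over the $T-2$ intermediate stages gives $\UB-\LB>\sum_t\epsilon_t/2=\epsilon$, contradicting termination. Hence at termination there is \emph{some} stage $t$ with $i_t\ge K_t$, and since $l_t\ge L/2$ for every $t$, Lemma~\ref{lemma:ComplexitySphericalCap} applied to whichever stage that is already produces the displayed lower bound. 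Your per-stage ingredients (each oracle evaluation produces one forward state lying in at most one cap, hence resolves at most one centre; an unresolved centre keeps $\olF_t^{i_t}(w_{t,k})>3\epsilon_t/2$) are correct and are precisely what feed that telescoping step---what must change is the logical quantifier: replace ``the gap at stage $s$ alone keeps $\UB-\LB$ above $\epsilon$'' by ``the gaps at all intermediate stages together keep $\UB-\LB$ above $\epsilon$.''
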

\begin{proof}
    We assume that our SSSO at stage \(t\in\calT\) returns points in \(\calW_t\) in the first \(K_t\) evaluations without repetition.
    More specifically, for \(t\in\calT'\) and \(i_t\le K_t\), given \(\ulcQ_t^{i_t},\olcQ_t^{i_t}\) and \(x_{t-1}^{i_{t-1}}\) at some \((i_{t-1},i_t)\in\calJ_t\), the SSSO returns
    \begin{itemize}
        \item a state \(x_t^{i_t}\in\calW_t\) with \(x_t^{i_t}\neq x_t^{i'_t}\) for all \(i'_t<i_t\);
        \item a linear cut \(\calV_t^{i_t}(x):=V_t^{i_t}(x)+\ulcQ_t^{i_t}(x_t^{i_t})\);
        \item an overestimate \(v_t^{i_t}:=F_t(x_{t-1}^{i_{t-1}})+\olcQ_t^{i_t}(x_t^{i_t})\).
    \end{itemize}
    where \(V_t^{i_t}(x):=F_t(x_{t-1}^{i_{t-1}})+\bangle{\nabla F_t(x_{t-1}^{i_{t-1}})}{x-x_{t-1}^{i_{t-1}}}\) for any \((i_{t-1},i_t)\in\calJ_t\).
    Thus our under- and over-approximations of \(\calQ_t\) can be written as 
    \begin{align*}
        &\ulcQ_{t-1}^{i_{t-1}}(x)=\max\Bigl\{0,\max\bigl\{
        \ulcQ_t^{i'_t}(x_t^{i'_t})+V_t^{i'_t}(x):(i'_{t-1},i'_t)\in\calJ_t,i'_{t-1}\le i_{t-1}\bigr\}\Bigr\},\\
        &\olcQ_{t-1}^{i_{t-1}}(x)=\conv\Bigl\{\olcQ_t^{i'_t}(x_t^{i'_t})+F_t(x_{t-1}^{i'_{t-1}})+M_t\nVert{x-x_{t-1}^{i'_{t-1}}}:(i'_{t-1},i'_t)\in\calJ_t,i'_{t-1}\le i_{t-1}\Bigr\}.
    \end{align*}

    To see that the above assumption on the SSSO does not conflict with Definition~\ref{def:NoninitialStageOracle}, it is straightforward to check that the state \(x_t^{i_t}\) is feasible, the overestimate \(v_t^{i_t}=F_t(x_{t-1}^{i_{t-1}})+\olcQ_t^{i_t}(x_t^{i_t})\ge\calQ_{t-1}(x_{t-1}^{i_{t-1}})\), and the gap is controlled \(v_t^{i_t}-\calV_t^{i_t}(x_{t-1}^{i_{t-1}})=\olcQ_t^{i_t}(x_t^{i_t})-\ulcQ_t^{i_t}(x_t^{i_t})\).
    For the validness of the linear cut \(\calV_t^{i_t}\), note that by Lemma~\ref{lemma:ComplexitySphericalCap}, \(V_t^{i_t}(x)<0\) for all \(x\in\calW_{t-1}\setminus\{x_{t-1}^{i_{t-1}}\}\).
    Thus by a recursive argument from \(t=T\) to \(2\), we know that \(\ulcQ_{t-1}^{i_{t-1}}(x)\equiv 0\) when \(i_t\le K_t\) for all \(t\in\calT'\).
    Similarly, one can verify that \(\olcQ_{t-1}(x_{t-1}^{i_{t-1}})\ge \frac{3}{2}\sum_{t'\ge t}\epsilon_{t'}\).
    Therefore, \(\UB-\LB=\olcQ_1^{i_1}(x_1^{i_1})-\ulcQ_1^{i_1}(x_1^{i_1})> \frac{3}{2}\sum_{t=2}^T\epsilon_t>\epsilon\).
    In other words, when the algorithms terminate, we must have \(i_t\ge K_t\) for some \(t\in\calT'\), which implies
    \begin{align*}
    \NumEval\ge K_t&\ge \frac{((d-1)^2-1)\sqrt{\pi}}{d-1}\frac{\Gamma((d-1)/2+1)}{\Gamma((d-1)/2+3/2)}\left(\frac{rL}{\epsilon_t}\right)^{(d-2)/2}\\
    &\ge \frac{d(d-2)\sqrt{\pi}}{d-1}\frac{\Gamma(d/2+1/2)}{\Gamma(d/2+1)}\left(\frac{DL(T-1)}{2\epsilon}\right)^{(d-2)/2}\\
    &>\frac{d}{d-1}\sqrt{\frac{\pi}{2}(d^2-4)}\left(\frac{DL(T-1)}{2\epsilon}\right)^{d/2-1}.
    \end{align*}
    Here, the last inequality is due to Wendel's bound on the ratio of two gamma functions~\cite{qi2010bounds}.
\end{proof}

Compared with the existing complexity lower bound in~\cite{zhang2022stochastic}, the construction in this work is based on a deterministic DR-MCO problem.
This is only possible because our general definition of SSSO allows more flexibility of choosing the feasible state $x_t$ and valid linear cut $\calV_{t-1}$, rather than requiring them to be primal or dual optimal.

\begin{remark}
    The CDDP and NDDP algorithms (Algorithms~\ref{alg:ConsecutiveDualDP} and~\ref{alg:NonconsecutiveDualDP}) and their complexity analyses depend only on the SSSO (Definitions~\ref{def:InitialStageOracle} and~\ref{def:NoninitialStageOracle}).
    Therefore, while we only discuss SSSO implementations for finitely supported DR-MCO in Section~\ref{subsec:SSSOImplementation}, the CDDP and NDDP algorithms work for more general DR-MCO problems.
    Moreover, the complexity upper bounds (Theorems~\ref{thm:CDDPComplexityBound} and~\ref{thm:NDDPComplexityBound}, Corollary~\ref{cor:DDPComplexityRelativeGap}) and the lower bound (Theorem~\ref{thm:ComplexityExample}) remain valid for them as well.
\end{remark}

\section{Numerical Experiments}
\label{sec:Numerical}
In this section, we numerically test the proposed CDDP and NDDP algorithms.
The first test problem is a robust multi-commodity inventory problem with customer demand uncertainty.
The second test problem is a distributionally robust hydro-thermal power planning problem with stochastic energy inflows.
The computation budget consists of 10 3.0-GHz CPU cores and a total of 70 GBytes of RAM.
The algorithms are implemented using JuMP package (\cite{dunning2017jump}, v1.14) in Julia language (v1.9) with Gurobi 9.5~\cite{gurobi} as its underlying LP solver.

\subsection{Multi-Commodity Inventory Problem}
\label{subsec:Inventory}

We consider the MRCO model of multi-commodity inventory problem with uncertain customer demands and deterministic holding and backlogging costs, following the description in~\cite{georghiou_robust_2019}. 
This problem can be described through the local cost functions \(f_t\) and the uncertainty sets \(\Xi_t\).
Let \(\calJ\coloneqq\{1,2,\dots,J\}\) denote the set of product indices.
We first describe the variables in each stage \(t\in\calT\). 
We use \(x^l_{t,j}\) to denote the inventory level, \(y^a_{t,j}\) (resp. \(x^b_{t,j}\)) to denote the amount of express (resp.\ standard) order fulfilled in the current (resp.\ subsequent) stage, of some product \(j\in\calJ\).
Let \(\xi_t\in\Xi_t\) denote the uncertainty vector controlling the customer demands in stage \(t\).
The state variables consist of the collection \(x_t\coloneqq(x^l_{t,1},\dots,x^l_{t,J},x^b_{t,1},\dots,x^b_{t,J})\), while the internal variables are simply the express order amounts \(y_t=(y^a_{t,1},\dots,y^a_{t,J})\).
In stage \(t\in\calT\), the local cost function \(f_t\) and the state space \(\calX_t\) are defined by
\begin{alignat}{2}\label{eq:InventoryProblemCost}
    f_t(x_{t-1},x_t;\xi_t)\coloneqq\min_{y_t}\;&
    C^F + \sum_{j\in\calJ}\bigl(C^a_j y^a_{t,j} + C^b_j x^b_{t,j} + C^H_j[x^l_{t,j}]_+ + &&c^B_j[x^l_{t,j}]_-\bigr) \\
    \mathrm{s.t.}\; 
    & \sum_{j\in\calJ}y^a_{t,j}\le B^c,\notag\\
    & x^l_{t,j}-y^a_{t,j}-x^b_{t-1,j}=x^l_{t,j}-D_{t,j}(\xi_t), &&\forall\, j\in\calJ,\notag\\
    & y^a_{t,j}\in[0,B^a_j], &&\forall\,j\in\calJ.\notag
\end{alignat}
and
\begin{equation}\label{eq:InventoryProblemState}
    \calX_t\coloneqq\bigl\{x_t\in\bbR^{2J}:0\le x^b_{t,j}\le B^b_j,\ -B^l_j\le x^l_{t,j}\le B^l_j,\ \forall\,j\in\calJ\bigr\}.
\end{equation}
Here in the formulation, \(C^a_j\) (resp.\ \(C^b\)) denotes the express (resp.\ standard) order unit cost,
\(C^H_j\) (resp.\ \(C^B_j\)) the inventory holding (resp.\ backlogging) unit cost, 
\(B^a_j\) (resp.\ \(B^b_j\)) the bound on the express (resp.\ standard) order, 
and \(B^l_j\) the inventory level bound, for the product \(j\in\calJ\), respectively.
The first constraint in~\eqref{eq:InventoryProblemCost} is a cumulative bound \(B^c\) on the express orders, the second constraint characterizes the change in the inventory level, and the last of~\eqref{eq:InventoryProblemCost} and the state space~\eqref{eq:InventoryProblemState} represent bounds on the decision variables with respect to each product.
We also put \(C^F>0\) as a fixed cost to ensure the cost function is strictly positive (cf. Corollary~\ref{cor:DDPComplexityRelativeGap}).
We use \([x]_+\coloneqq\max\{x,0\}\) and \([x]_-\coloneqq-\min\{0,x\}\) to denote the positive and negative part of a real number \(x\).
The initial state \(x_0\) is given by \(x^l_{0,j}=x^b_{0,j}=0\) for all \(j\in\calJ\).
The uncertainty set \(\Xi_t\) is an \(E\)-dimensional box \([-1,1]^{E}\), and the customer demand is predicted by the following factor model:
\begin{equation}
    D_{t,j}(\xi_t)=\begin{cases}
        &2+\sin\left(\dfrac{(t-1)\pi}{5}\right)+\Phi_{t,j}^\transpose\xi_t,\quad j\le J/2,\\
        &2+\cos\left(\dfrac{(t-1)\pi}{5}\right)+\Phi_{t,k}^\transpose\xi_t,\quad j> J/2,
    \end{cases}
\end{equation}
where \(\Phi_{t,j}\) is an \(E\)-dimensional random vector with its entries chosen uniformly from \([-1/E,1/E]\).
Thus the value \(\Phi_{t,j}^\transpose\xi_t\in[-1,1]\) and \(D_{t,j}(\xi_t)\ge 0\) for all \(t\in\calT\) and \(j\in\calJ\).

For the following numerical test, we set the number of products \(J=5\), the number of uncertainty factors \(E=4\), \(B_j^a=B_j^b=B_j^l=10\), \(C_j^b=1\) for all \(j\in\calJ\), \(C^F=1\), and \(B^c=0.3J=1.5\).
The costs are generated uniformly at random within \(C_j^a\in[1,3]\), \(C_j^H,c_j^B\in[0,2]\) for all \(j\in\calJ\).
Due to lack of RCR of the problem~\eqref{eq:InventoryProblemCost}, we first test both CDDP (Algorithm~\ref{alg:ConsecutiveDualDP}) and NDDP (Algorithm~\ref{alg:NonconsecutiveDualDP}) with dual bounds \(M_t=1.0\times 10^2\) for all \(t\in\calT\).
The optimality gap set to be relative \(\alpha=1\%\) and approximation gaps set dynamically by \(\delta_t^{i_t}=\LB\cdot\alpha(T-t)/(T-1)\) for \(t\in\calT\).
Over 3 independently generated test cases, we plot the growth of computational time and number of oracle evaluations with respect to the number of stages in Fig.~\ref{fig:Inventory}, which shows that NDDP performs increasingly better than CDDP as the number of stages increases. 
In particular, NDDP can save up to 47.8\% computation time and up to 44.2\% subproblem oracle evaluations than CDDP on these 40-stage problems. 

\begin{figure}[htbp]
    \centering
    \begin{subfigure}{0.4\textwidth}
        \centering
        \includegraphics[width=\textwidth]{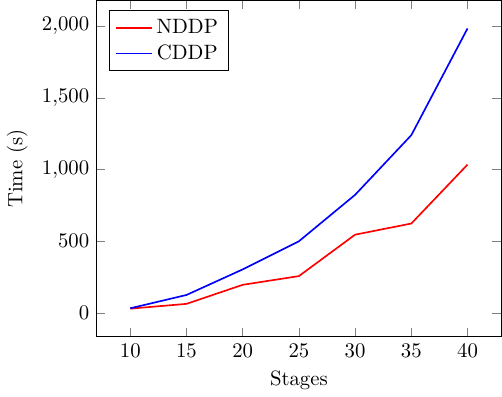}
    \end{subfigure}
    \hspace{5mm}
    \begin{subfigure}{0.4\textwidth}
        \centering
        \includegraphics[width=0.91\textwidth]{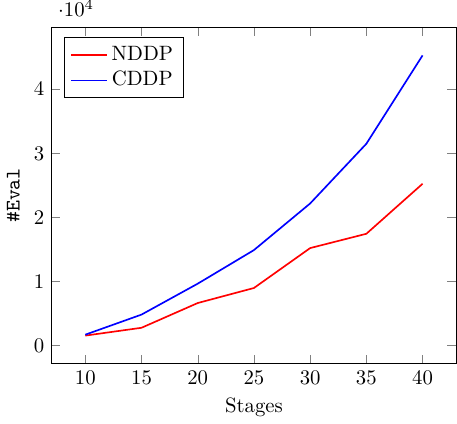}
    \end{subfigure}
    \\
    \begin{subfigure}{0.4\textwidth}
        \centering
        \includegraphics[width=\textwidth]{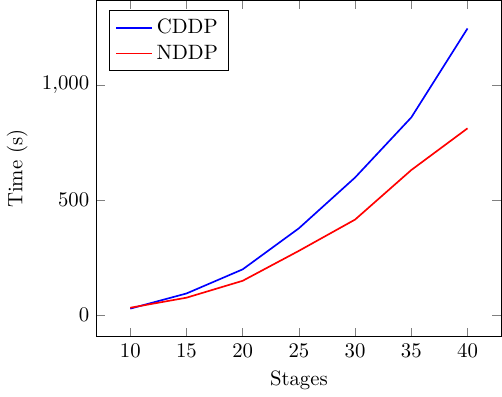}
    \end{subfigure}
    \hspace{5mm}
    \begin{subfigure}{0.4\textwidth}
        \centering
        \includegraphics[width=0.91\textwidth]{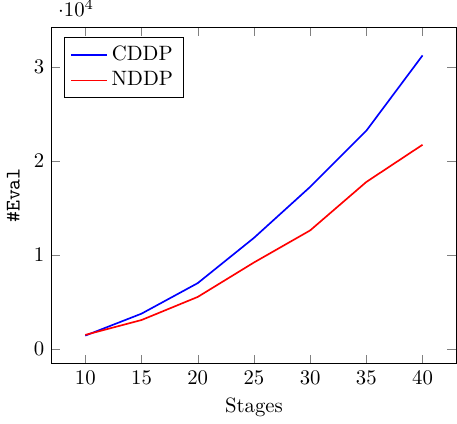}
    \end{subfigure}
    \\
    \begin{subfigure}{0.4\textwidth}
        \centering
        \includegraphics[width=\textwidth]{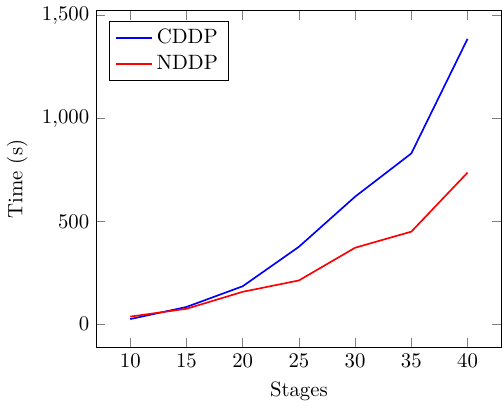}
    \end{subfigure}
    \hspace{5mm}
    \begin{subfigure}{0.4\textwidth}
        \centering
        \includegraphics[width=0.91\textwidth]{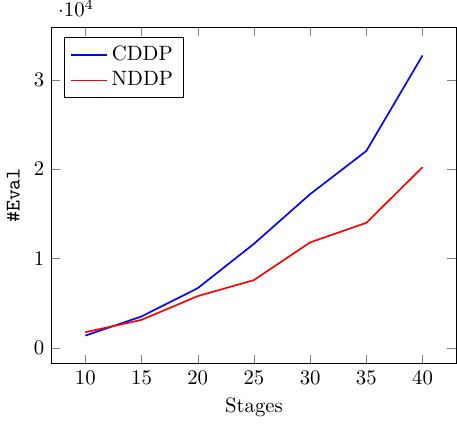}
    \end{subfigure}
    \caption{Growth of computation expenses for CDDP and NDDP}
    \label{fig:Inventory}
\end{figure}

As a comparison, we implement the NDDP algorithm without dual bounds.
Instead, it generates linear feasibility cuts for approximation of the feasible sets (see definition of feasibility cuts in, e.g.,~\cite{grothey1999note}).
We limit the computational budget to \(2000\cdot T\) subproblem oracle calls.
For 5 independently generated test cases, we have obtained the following results (Table~\ref{tab:Inventory}).
\begin{table}[htbp]
    \centering
    \caption{Comparison of NDDP Performances with or without dual bounds\label{tab:Inventory}}
    {\small
    \begin{tabular}{crrrrrrrr}
    \hline
    & \multicolumn{4}{c}{With Dual Bounds} & \multicolumn{4}{c}{Without Dual Bounds} \\
    Stage & LB & UB & Time (s) & \(\NumEval\) & LB & UB   & Time (s) & \(\NumEval\) \\
    \hline
        \multirow{5}[0]{*}{10}
        & 154.70 & 156.02 & 30.61 & 1554        & 154.70 & 155.30 & 33.34 & 1497 \\
        & 155.98 & 157.06 & 30.80 & 1523        & 155.98 & 157.37 & 14.16 & 1371 \\
        & 128.33 & 128.89 & 37.33 & 1771        & 128.31 & 129.48 & 19.46 & 1803 \\
        & 137.06 & 137.16 & 26.69 & 1355        & 137.06 & 137.06 & 14.75 & 1397 \\
        & 120.12 & 120.82 & 70.06 & 2345        & 118.11 & inf & 318.05 & 20003 \\
        \hline
        \multirow{5}[0]{*}{15}
        & 232.46 & 233.85 & 63.84 & 2776        & 232.13 & inf & 440.07 & 30037 \\
        & 233.38 & 235.56 & 74.51 & 3104        & 233.58 & 235.23 & 33.90 & 2891 \\
        & 202.31 & 203.40 & 75.25 & 3154        & 202.48 & 203.70 & 40.89 & 3522 \\
        & 208.56 & 210.14 & 61.55 & 2707        & 208.50 & 209.28 & 33.85 & 2900 \\
        & 195.16 & 196.78 & 129.38 & 4589        & 191.30 & inf & 666.52 & 30024 \\
        \hline
        \multirow{5}[0]{*}{20}
        & 291.78 & 293.94 & 196.57 & 6637        & 286.88 & inf & 615.68 & 40054 \\
        & 292.87 & 295.77 & 148.18 & 5561        & 292.87 & 293.14 & 67.45 & 4955 \\
        & 256.15 & 258.03 & 157.78 & 5810        & 256.19 & 258.63 & 74.12 & 5762 \\
        & 261.81 & 263.80 & 113.54 & 4569        & 261.81 & 261.82 & 61.22 & 4821 \\
        & 249.85 & 252.26 & 277.24 & 8296        & 235.01 & inf & 665.76 & 40002 \\
        \hline
        \multirow{5}[0]{*}{25}
        & 369.62 & 372.54 & 257.33 & 8984        & 369.66 & 371.10 & 102.86 & 7334 \\
        & 370.08 & 373.74 & 278.66 & 9225        & 370.47 & 371.39 & 101.37 & 7153 \\
        & 330.18 & 333.38 & 213.01 & 7602        & 330.37 & 331.63 & 112.88 & 8133 \\
        & 333.20 & 336.05 & 166.87 & 6346        & 333.34 & 333.74 & 96.10 & 6946 \\
        & 324.83 & 327.99 & 479.58 & 12785        & 323.43 & inf & 1167.04 & 50006 \\
        \hline
        \multirow{5}[0]{*}{30}
        & 428.81 & 432.41 & 545.98 & 15212        & 429.11 & 429.69 & 165.55 & 10217 \\
        & 429.34 & 432.08 & 414.91 & 12639        & 429.77 & 430.27 & 141.67 & 9549 \\
        & 383.74 & 387.01 & 372.00 & 11820        & 384.22 & 387.43 & 173.06 & 11368 \\
        & 386.54 & 390.41 & 277.57 & 9430        & 386.56 & 386.58 & 151.97 & 9824 \\
        & 379.46 & 382.84 & 649.69 & 16730        & 379.27 & inf & 1230.98 & 60016 \\
        \hline
    \end{tabular}}
\end{table}

In Table~\ref{tab:Inventory}, the inf indicates values of infinity or numerically infinity values (i.e., values greater than \(10^8\)) upon termination.
As we see from the table, the NDDP algorithm together with feasibility cuts fails to solve 2 out of 5 cases even when there are only 15 stages, showing the instability of the performance of feasibility cuts.
In contrast, the algorithm with the dual-bounding technique solves all of the cases within a reasonable computation time and number of subproblem oracle evaluations, without any optimality gap on those cases that both formulations are able to solve.
This demonstrates the ability of the our proposed DDP algorithms handling problems without RCR.
It is worth mentioning that for cases where the NDDP algorithm converges without dual bounds, the computation time used is usually smaller than the regularized problem, which can be explained by better numerical conditions of feasibility cuts and their effect on reducing the effective volumes of the state space.

\subsection{Hydro-Thermal Power Planning Problem}
\label{subsec:PowerSystem}

We consider the Brazilian interconnected power system described in~\cite{ding2019python}.
By assuming the stagewise independence and taking sample average approximation in the underlying stochastic energy inflow, we formulate the problem below as a finitely supported DR-MCO~\eqref{eq:DR-MSCP-Def}.
Let \(\calJ=\{1,\dots,J\}\) denote the indices of regions in the system (\(J=4\) in our data), and \(\calL=\cup_{j\in\calJ}\calL_j\) the indices of thermal power plants, where each of the disjoint subsets \(\calL_j\) is associated with the region \(j\in\calJ\).
We first describe the decision variables in each stage \(t\in\calT\). 
Let \(\Xi_t=\{\hat{\xi}_{t,1},\dots,\hat{\xi}_{t,n_t}\}\) denote the finite uncertainty set of energy inflow in stage \(t\in\calT'\).
We use \(x^l_{t,j}\) to denote the stored energy level, \(y^h_{t,j}\) to denote the hydro power generation, and \(y^s_{t,j}\) to denote the energy spillage, of some region \(j\in\calJ\); 
and \(y^g_{t,l}\) to denote the thermal power generation for some thermal power plant \(l\in\calL\).
For two different regions \(j\neq j'\in\calJ\), we use \(y^e_{t,j,j'}\) to denote the energy exchange from region \(j\) to region \(j'\), and \(y^a_{t,j,j'}\) to denote the deficit account for region \(j\) in region \(j'\).
Let \(x_t\coloneqq(x^l_{t,1},\dots,x^l_{t,J})\) be the state variables and \(y_t\) denote the internal variables consisting of \(y^h_{t,j},y^s_{t,j},y^g_{t,l},y^e_{t,j,j'}\), and \(y^a_{t,j,j'}\) for any \(j,j'\in\calJ\) and \(l\in\calL\).
Then in stage \(t\in\calT\), the state space is simply \(\calX_t\coloneqq\prod_{j=1}^J[0,B_j^l]\), and the local cost function \(f_t\) can be defined as
\begin{alignat}{2}\label{eq:PowerSystem}
    f_t(x_{t-1}&,x_t;\xi_t)\coloneqq\\
    \min_{y_t}\quad&\sum_{j\in\calJ}\bigg( C^s y^s_{t,j} + \sum_{l\in\calL_j}C^g_l y^g_{t,l} + \sum_{j'\in\calJ}\big(C^e_{j,j'} y^e_{t,j,j'} + C^a_{j,j'} y^a_{t,j,j'}\big) \bigg) &&\notag\\
    \mathrm{s.t.}\quad& x^l_{t,j}+y^h_{t,j}+y^s_{t,j}=x^l_{t-1,j}+\xi_{t,j}, &&\forall\,j\in\calJ,\notag\\
    & y^h_{t,j}+\sum_{l\in\calL_j}y^g_{t,l}+\sum_{j'\in\calJ}(y^a_{t,j,j'}-y^e_{t,j,j'}+y^e_{t,j',j})=D_{t,j}, &&\forall\,j\in\calJ,\notag\\
    & y^h_{t,j}\in[0,B^h_j], &&\forall\,j\in\calJ,\notag\\
    & y^g_{t,l}\in[B^{g,-}_l,B^{g,+}_l], &&\forall\,l\in\calL,\notag\\
    & y^a_{t,j,j'}\in[0,B^a_{j,j'}],\ y^e_{t,j,j'}\in[0,B^e_{j,j'}], &&\forall\,j,j'\in\calJ.\notag
\end{alignat}
Here in the formulation, \(C^s\) denotes the unit penalty on energy spillage,
\(C^g_l\) the unit cost of thermal power generation of plant \(l\),
\(C^e_{j,j'}\) the unit cost of power exchange from region \(j\) to region \(j'\),
\(C^a_{j,j'}\) the unit cost on the energy deficit account for region \(j\) in region \(j'\),
\(D_{t,j}\) the deterministic power demand in stage \(t\) and region \(j\),
\(B^l_j\) the bound on the storage level in region \(j\),
\(B^h_j\) the bound on hydro power generation in region \(j\),
\(B^{g,-}_l,B^{g,+}_l\) the lower and upper bounds of thermal power generation in plant \(l\),
\(B^a_{j,j'}\) the bound on the deficit account for region \(j\) in region \(j'\),
and \(B^e_{j,j'}\) the bound on the energy exchange from region \(j\) to region \(j'\).
The first constraint in~\eqref{eq:PowerSystem} characterizes the change of energy storage levels in each region \(j\), the second constraint imposes the power generation-demand balance for each region \(j\), and the rest are bounds on the decision variables.
The initial state \(x_0\) and \(\xi_1\) are given by data.

The energy inflow outcomes are sampled from multivariate lognormal distributions that are interstage independent.
Then the distributional ambiguity set is constructed using Wasserstein metric to reduce the effect of overtraining with the sampled outcome, according to~\cite{duque2019distributionally}.
To be precise, suppose \(\hat{p}_{t}=(1/n_t,\dots,1/n_t)\in\Delta^{n_t}\) is an empirical distribution of outcomes \(\Xi_t=\{\hat{\xi}_{t,1},\dots,\hat{\xi}_{t,n_t}\}\).
Then, the distributional ambiguity set \(\calP_{t}\) is described by
\begin{equation}\label{eq:HydroThermalWassersteinSet}
    \calP_{t}\coloneqq\left\{p_{t}\in\Delta^{n_t}:W_t(p_{t},\hat{p}_{t})\le\rho_t\right\},
\end{equation}
for some radius \(\rho_t\ge0\), where the Wasserstein metric \(W_t\) for finitely supported distributions is defined by
\begin{alignat}{2}
    W_t(p_{t},\hat{p}_{t})\coloneqq\min_{\pi_{k,k'}\ge0}\quad&\sum_{k,k'=1}^{n_t}\nVert{\hat{\xi}_{t,k}-\hat{\xi}_{t,k'}}\pi_{k,k'}\label{eq:DefWassersteinDist}\\
    \mathrm{s.t.}\quad&\sum_{k=1}^{n_t}\pi_{k,k'}=p_{t,k}, &&\forall\,k'=1,\dots,n_t,\notag\\
    &\sum_{k'=1}^{n_t}\pi_{k,k'}=\hat{p}_{t,k}=\frac{1}{n_t},\quad &&\forall\,k=1,\dots,n_t.\notag
\end{alignat}
Note when the radius \(\rho_t=0\), the ambiguity set \(\calP_{t}\) becomes a singleton.
In our numerical tests, we choose the radius to be relative to the total distances, i.e., \(\rho_t=\beta\cdot\sum_{k,k'=1}^{n_t}\nVert{\hat{\xi}_{t,k}-\hat{\xi}_{t,k'}}\) for some \(\beta\ge0\).
At the same time, we use uniform dual bounds for the tests, i.e., \(M_t=M>0\) for all \(t\in\calT'\).
When the relative optimality gap \(\alpha\) is smaller than the threshold \(5\%\), we check whether all the active cuts in the recent iterations are strictly smaller than the dual bound.
If they are, then the algorithm is terminated, and otherwise the dual bound \(M\) is increased by a factor of \(\sqrt{10}\approx 3.1623\) with all the over-approximations reset to \(\ulcQ_t^i(x)\leftarrow+\infty\), \(t\in\calT\).
Five scenarios are sampled independently in each stage for the nominal problem \(n_t=5\) before the distributional robust counterpart is constructed by \eqref{eq:HydroThermalWassersteinSet}.
For the 24-stage problem that we consider, the samples already give a total \(5^{24}\approx 5.9\times 10^{16}\) scenario paths, which is already impossible to solve via an extensive formulation.
We have then obtained the following results (Table~\ref{tab:HydroThermal}).
\begin{table}[htbp]
    \centering
    \caption{CDDP on hydro-thermal power planning with different radii\label{tab:HydroThermal}}
    {\small
    \begin{tabular}{crrrrr}
    \hline
    $\beta$ & $\log_{10}(M)$ & LB ($\cdot10^7$) & UB ($\cdot10^7$) & Med.\ Time (s) & Med.\ $\NumEval$ \\
    \hline
        \multirow{5}[0]{*}{0.00}
        & 2 & 4.60 & 4.84 & 784.45 & 33075 \\
        & 3 & 4.59 & 4.83 & 301.06 & 15345 \\
        & 4 & 4.58 & 4.82 & 374.88 & 17505 \\
        & 5 & 4.58 & 4.82 & 378.80 & 17595 \\
        & 6 & 4.58 & 4.82 & 386.86 & 17910 \\
        \hline
        \multirow{5}[0]{*}{0.02}
        & 2 & 4.84 & 5.09 & 439.28 & 22095 \\
        & 3 & 4.84 & 5.09 & 263.26 & 13725 \\
        & 4 & 4.82 & 5.07 & 365.72 & 17100 \\
        & 5 & 4.82 & 5.07 & 358.08 & 16830 \\
        & 6 & 4.82 & 5.07 & 405.42 & 18090 \\
        \hline
        \multirow{5}[0]{*}{0.04}
        & 2 & 5.10 & 5.36 & 921.99 & 36720 \\
        & 3 & 5.06 & 5.32 & 237.34 & 13095 \\
        & 4 & 5.06 & 5.33 & 327.38 & 15975 \\
        & 5 & 5.06 & 5.33 & 325.62 & 15930 \\
        & 6 & 5.06 & 5.33 & 350.65 & 16650 \\
        \hline
        \multirow{5}[0]{*}{0.06}
        & 2 & 5.34 & 5.62 & 606.61 & 27810 \\
        & 3 & 5.32 & 5.60 & 284.40 & 14445 \\
        & 4 & 5.31 & 5.59 & 277.21 & 14310 \\
        & 5 & 5.30 & 5.58 & 271.26 & 14130 \\
        & 6 & 5.31 & 5.59 & 285.30 & 14445 \\
        \hline
        \multirow{5}[0]{*}{0.08}
        & 2 & 5.57 & 5.86 & 519.87 & 25380 \\
        & 3 & 5.55 & 5.84 & 204.36 & 11790 \\
        & 4 & 5.54 & 5.83 & 290.96 & 14715 \\
        & 5 & 5.54 & 5.83 & 223.92 & 12510 \\
        & 6 & 5.54 & 5.83 & 257.26 & 13500 \\
        \hline
        \multirow{5}[0]{*}{0.10}
        & 2 & 5.81 & 6.12 & 446.94 & 22140 \\
        & 3 & 5.78 & 6.08 & 194.85 & 10980 \\
        & 4 & 5.78 & 6.09 & 262.42 & 12915 \\
        & 5 & 5.78 & 6.09 & 215.10 & 11790 \\
        & 6 & 5.79 & 6.09 & 237.22 & 12420 \\
        \hline
    \end{tabular}}
\end{table}
  
  In Table~\ref{tab:HydroThermal}, the lower bound (LB), the upper bound (UB) at termination, the computation time (Med.\,Time) and the number of subproblem oracles (Med.\,\(\NumEval\)) shown are the median of the five test cases.
  The logarithmic dual bound \(\log_{10}(M)\) listed in the table correspond to the initial dual bound.
  We see that for different choices of the relative radii \(\beta\), the median computation time and number of subproblem oracle evaluations are usually smaller when \(\log_{10}(M)=3\), without compromising the quality of upper and lower bounds.
  This can be explained by the better numerical conditions for the smaller dual bound (the cuts have smaller Lipschitz constants), leading to shorter subproblem oracle evaluations times (cf. Algorithm~\ref{alg:SubproblemOracle}).
  We thus conclude that the dual-bounding technique could lead to smaller number of subproblem oracle evaluations, as well as shorter computation time for a given DR-MCO problem.

\section{Concluding Remarks}
\label{sec:Conclusion}

In this work, we extend the CDDP algorithm to DR-MCO problems and more importantly, we propose a new NDDP algorithm that achieves an optimal single stage subproblem oracle complexity when terminated with relative optimality criteria.
Subproblem oracles are defined as an abstraction of the usual subproblem solution subroutines, with an illustration on finite uncertainty sets in Algorithm~\ref{alg:SubproblemOracle}.
An interesting extension is to develop implementations of subproblem oracles on possibly infinite uncertainty sets.
Moreover, on the practical side, an important question is how the out-of-sample performance compares with the usual MSCO and MRCO models.
Finally, while our CDDP and NDDP algorithms can converge with deterministic upper bounds, the sampling-based stochastic DDP algorithms usually leads to good lower bounds within fewer iterations on MSCO problems.
It is then of interest to see any warm-start methods of our CDDP or NDDP algorithm through sampling.
These directions will be investigated in our subsequent work~\cite{zhang2022distributionally}.

\bibliographystyle{plainnat}
\bibliography{ref_rob_stoch,ref_misc}

\clearpage
\appendix
\section{Instantiations of DR-MCO Framework}
\label{sec:Instantiations}

In this section, we instantiate our DR-MCO framework for some commonly studied problems in the literature, e.g.,~\cite{duque2019distributionally}, \cite{georghiou_robust_2019},  and~\cite{philpott_distributionally_2018}.

\subsection{More Examples of Ambiguity Sets}

Besides the MSCO and the polyhedral MRCO that are discussed in Section~\ref{sec:FiniteUncertaintySet}, we now present some more examples of ambiguity sets constructed using $\phi$-divergences or Wasserstein distances.

We begin with the ambiguity sets defined by $\phi$-divergences.
For simplicity, we restrict our attention to the case where there is a finitely supported nominal probability measure $\hat{p}_t$ in each stage $t$, which is a standard assumption in many single- or two-stage problems~\cite{ben2013robust,bayraksan2015data}.
Without loss of generality, we may assume that the finite support set of $\hat{p}_t$ is contained in our finite uncertainty set
$\Xi_t=\{\hat{\xi}_{t,1},\dots,\hat{\xi}_{t,n_t}\}$.  
Let $\phi:(0,+\infty)\to\bbR\cup\{+\infty\}$ be a proper, lower semicontinuous convex function such that 
$\phi(1)=0$.
Then the $\phi$-divergence ambiguity set of size $\rho_t>0$ is defined as
\begin{equation}\label{eq:app:PhiDivergence}
    \calP_t:=\left\{p_t=(p_{t,1},\dots,p_{t,n_t})\in\Delta^{n_t}:\sum_{j=1}^{n_t}\hat{p}_{t,j}\phi\Big(\frac{p_{t,j}}{\hat{p}_{t,j}}\Big)\le\rho_t\right\}.
\end{equation}
Here, we allow the possibility of the denominator $\hat{p}_{t,i}$ being zero by taking the convention that $0\phi(a/0):=a\bar{\phi}$ for any $a>0$ and $0\phi(0/0):=0$, where $\bar{\phi}:=\lim_{b\to+\infty}\phi(b)/b$ is a parameter determined by our choice of $\phi$, possibly taking the value $+\infty$.
We will also use the notation of (Fenchel) conjugate function $\phi^*(a):=\sup_{b\ge0}\{ab-\phi(b)\}$ of $\phi$.
Popular choices for $\phi$ include
\begin{itemize}
    \item Kullback-Leibler divergence: $\phi(b)=b\log(b)-b+1$, with $\phi^*(a)=\exp(a)-1$; 
    \item variation distance: $\phi(b)=|b-1|$, with
    \[
        \phi^*(a)=\begin{cases}
            -1,\quad&a<-1,\\
            a,\quad&-1\le a\le1;
        \end{cases}
    \]
    and
    \item modified $\chi^2$-divergence: $\phi(b)=(b-1)^2$, with 
    \[
        \phi^*(a)=\begin{cases}
            -1,\quad &a<-2,\\
            a+a^2/4,\quad &a\ge-2.
        \end{cases}
    \]
    The modified $\chi^2$-divergence has been used in the DR-MLO model in~\cite{philpott_distributionally_2018}.
\end{itemize}
In all these cases, lines 7-8 of our SSSO implementation described in Algorithm~\ref{alg:SubproblemOracle} can be efficiently solved through reformulation into linear conic optimization problems (and in particular, linear optimization problems for the variation distance, and second-order conic optimization problems for the modified $\chi^2$-divergence).
Alternatively, following the duality argument for $\phi$-divergences in~\cite{ben2013robust,bayraksan2015data}, we can also consider a reformulation of the recursion problem~\eqref{eq:DR-MSCP-Recursion} as 
\begin{equation}\label{eq:app:ReformulationPhiDivergence}
    \begin{aligned}
        \calQ_{t-1}(x_{t-1})=\min_{\mu,\nu,\alpha_{t,k},x_{t,k}}\quad &\mu+\rho_t\nu+\nu\sum_{k=1}^{n_t}\hat{p}_{t,k}\phi^*\Big(\frac{\alpha_{t,k}-\mu}{\nu}\Big)\\
        \mathrm{s.t.}\quad &\alpha_{t,k}\ge f_t(x_{t-1},x_{t,k};\hat{\xi}_{t,k})+\calQ_t(x_{t,k}),\ &&k=1,\dots,n_t,\\
        & \alpha_{t,k}-\mu\le\bar{\phi}\cdot\nu,\quad &&k=1,\dots,n_t,\\
        & x_{t,k}\in\calX_t,\quad &&k=1,\dots,n_t,\\
        &\nu\ge0,
    \end{aligned}
\end{equation}
where the constraint $\alpha_{t,k}-\mu\le\bar{\phi}\cdot\nu$ is discarded when $\bar{\phi}=+\infty$.
This reformulation~\eqref{eq:app:ReformulationPhiDivergence} allows us to use only one duplicating variable $z_t$ in a bounded Lagrangian dual problem to generate a linear cut:
\begin{equation}\label{eq:app:BoundedDualPhiDivergence}
    \begin{aligned}
        \max_{\nVert{\lambda}_*\le M_t}\min_{\substack{\mu,\nu,z_t\\\alpha_{t,k},x_{t,k}}}\quad &\mu+\rho_t\nu+\nu\sum_{k=1}^{n_t}\hat{p}_{t,k}\phi^*\Big(\frac{\alpha_{t,k}-\mu}{\nu}\Big)+\bangle{\lambda}{x_{t-1}-z_t}\\
        \mathrm{s.t.}\quad &\alpha_{t,k}\ge f_t(z_t,x_{t,k};\hat{\xi}_{t,k})+\calQ_t(x_{t,k}),\ &&k=1,\dots,n_t,\\
        & \alpha_{t,k}-\mu\le\bar{\phi}\cdot\nu,\quad &&k=1,\dots,n_t,\\
        & x_{t,k}\in\calX_t,\quad &&k=1,\dots,n_t,\\
        &\nu\ge0.
    \end{aligned}
\end{equation}

Another family of ambiguity sets are defined by Wasserstein distances.
Here, similar to the $\phi$-divergence case above, we briefly derive the reformulation of bounded Lagrangian dual problems when the support set is finite (see definition in~\eqref{eq:HydroThermalWassersteinSet} and \eqref{eq:DefWassersteinDist}).
DR-MCO with Wasserstein ambiguity sets involving infinite support sets will be discussed in our subsequent work~\cite{zhang2022distributionally}.

Let $\hat{p}_t=(\hat{p}_{t,1},\dots,\hat{p}_{t,n_t})\in\Delta^{n_t}$ denote the center of of a Wasserstein ball of radius $\rho_t>0$.
As discussed in~\cite{duque2019distributionally}, the recursion problem~\eqref{eq:DR-MSCP-Recursion} with Wasserstein ambiguity sets~\eqref{eq:HydroThermalWassersteinSet} can be reformulated as
\begin{equation}\label{eq:app:Wasserstein}
    \begin{aligned}
        \calQ_{t-1}(x_{t-1})=
        \min_{\beta,\alpha_{t,k},x_{t,k}}\quad &\rho_t\beta+\sum_{k=1}^{n_t}\hat{p}_{t,k}\alpha_{t,k}\\
        \mathrm{s.t.}\quad &\nVert{\hat{\xi}_{t,k}-\hat{\xi}_{t,k'}}\beta+\alpha_{t,k} && \\
        &\quad \ge f_t(x_{t-1},x_{t,k};\hat{\xi}_{t,k'})+\calQ_t(x_{t,k}),\quad &&k,k'=1,\dots,n_t,\\
        &x_{t,k}\in\calX_t,\quad &&k=1,\dots,n_t,\\
        &\beta\ge0.
    \end{aligned}
\end{equation}
Then in our context, the bounded Lagrangian dual problem can be written as
\begin{equation}\label{eq:app:BoundedDualWasserstein}
    \begin{aligned}
        \max_{\nVert{\lambda_t}_*\le M_t}
        \min_{\substack{\beta,\alpha_{t,k},\\z_t,x_{t,k}}}\quad &\rho_t\beta+\sum_{k=1}^{n_t}\hat{p}_{t,k}\alpha_{t,k}+\bangle{\lambda_t}{x_{t-1}-z_t}\\
        \mathrm{s.t.}\quad &\nVert{\hat{\xi}_{t,k}-\hat{\xi}_{t,k'}}\beta+\alpha_{t,k} && \\
        &\quad \ge f_t(z_t,x_{t,k};\hat{\xi}_{t,k'})+\calQ_t(x_{t,k}),\quad &&k,k'=1,\dots,n_t,\\
        &x_{t,k}\in\calX_t,\quad &&k=1,\dots,n_t,\\
        &\beta\ge0.
    \end{aligned}
\end{equation}

An alternative implementation of SSSO can be based on~\eqref{eq:app:BoundedDualPhiDivergence} and~\eqref{eq:app:BoundedDualWasserstein}, which we summarize in Algorithm~\ref{alg:app:SubproblemOracleCentralized}.
We can see that the alternative SSSO implementation is simpler in the sense that it does not involve any for-loop.
While in general, decomposing into subproblems for each outcome $k=1,\dots,n_t$ is more favorable for solution efficiency and parallelization, on small instances Algorithm~\ref{alg:app:SubproblemOracleCentralized} could avoid the overhead for the for-loop.
Moreover, the instantiations~\eqref{eq:app:PhiDivergence} and~\eqref{eq:app:Wasserstein} together with Algorithm~\ref{alg:app:SubproblemOracleCentralized} show that our complexity analyses easily apply to the existing works on DR-MCO with $\phi$-divergence and Wasserstein distance ambiguity sets~\cite{philpott_distributionally_2018,duque2019distributionally}.

\begin{algorithm}[ht]
    \caption{An Alternative Realization of Noninitial Stage Subproblem Oracle}
    \label{alg:app:SubproblemOracleCentralized}
    \begin{algorithmic}[1]
        \Require{\(\Xi_t=\{\hat{\xi}_{t,1},\dots,\hat{\xi}_{t,n_t}\}\) and \((x_{t-1},\ulcQ_t,\olcQ_t)\)} 
        \Ensure{\((\calV_{t-1},v_{t-1},x_t;\gamma_t)\) satisfying Definition~\ref{def:NoninitialStageOracle}}
        \State{solve the bounded Lagrangian dual problem~\eqref{eq:app:BoundedDualPhiDivergence} or \eqref{eq:app:BoundedDualWasserstein}} 
        \State{collect the solutions \((\hat{x}_{t,k},\hat{z}_{t};\hat{\lambda}_{t})\) and the optimal value \(\ubar{v}_{t}\)}
        \State{define \(\calV_t(x)\coloneqq\ubar{v}_{t}+\bangle{\hat{\lambda}_{t}}{x-x_{t-1}}\)}
        \State{calculate \(\gamma_{t,k}\coloneqq\olcQ_t(\hat{x}_{t,k})-\ulcQ_t(\hat{x}_{t,k})\)}
        \State{let \(\bar{v}_{t,k}:=f_t(x_{t-1},\hat{x}_{t,k};\hat{\xi}_{t,k})+\olcQ_t(\hat{x}_{t,k})+M_t\nVert{x_{t-1}-\hat{z}_{t,k}}\)}
        \State{calculate \(v_{t-1}\coloneqq\max_{p_{t}\in\calP_{t}}\sum_{k=1}^{n_t}p_{t,k}\bar{v}_{t,k}\)}
        \State{find \(k^*\) such that \(\gamma_{t,k^*}\ge\gamma_{t,k}\) for all \(k=1,\dots,n_t\) and set \(x_t\coloneqq \hat{x}_{t,k^*},\gamma_t\coloneqq\gamma_{t,k^*}\)}
    \end{algorithmic}
\end{algorithm}

\subsection{Distributionally Robust Multistage Linear Optimization}
\label{subsec:DR-MLO}

We now discuss a special subclass, distributionally robust multistage linear optimization (DR-MLO), of the more general DR-MCO problems.
The models used in our numerical experiments in Section~\ref{sec:Numerical} fall within this DR-MLO subclass.
To be most concrete, we switch to the standard vector dot product notation (e.g., $c^\transpose y$) from the previously used inner product notation $\bangle{\cdot}{\cdot}$.
Using auxiliary variables $y_t$ as in Example~\ref{ex:ConstrainedOptimization}, let the local cost function $f_t$ in stage $t$ be defined by 
\begin{equation}\label{eq:app:DR-MLO}
    \begin{aligned}
        f_t(x_{t-1},x_t;\xi_t):=\min_{y_t}\quad
        & c_t^\transpose y_t\\
        \mathrm{s.t.}\quad & \calA_t(\xi_t)x_{t-1}+B_tx_t+E_ty_t\ge F_t\xi_t+g_t,
    \end{aligned}
\end{equation}
for some matrix-valued linear map $\calA_t(\xi_t)$, matrices $B_t,E_t,F_t$ and vectors $c_t,g_t$ of appropriate dimensions.
While the state variable $x_t$ does not directly appear in the objective function, we can use the standard trick of increasing the dimension of $y_t$ by 1 and setting the added component to represent the cost incurred by $x_t$.
The DR-MLO~\eqref{eq:app:DR-MLO} has a special uncertainty structure: the coefficients of $x_{t-1}$ and the right-hand side in the constraint depend affine linearly in the uncertainty vector $\xi_t$, while the coefficients of $x_t$ and $y_t$ are deterministic.
The same structure was adopted by the existing multistage robust linear optimization (MRLO) framework~\cite{georghiou_robust_2019}, and we will see below that it conforms to our assumptions on $f_t$ and facilitates algorithmic discussion.
The recursion~\eqref{eq:DR-MSCP-Recursion} can be written as $\calQ_{t-1}(x_{t-1})=\sup_{p_t\in\calP_t}\bbE_{\xi_t\sim p_t}Q_t(x_{t-1};\xi_t)$ where
\begin{equation}\label{eq:app:DR-MLO-Recursion}
    \begin{aligned}
    Q_{t}(x_{t-1};\xi_t)=\min_{x_t,y_t}\quad &c_t^\transpose y_t+\calQ_t(x_t)\\
    \mathrm{s.t.}\quad& \calA_t(\xi_t)x_{t-1}+B_tx_t+E_ty_t\ge F_t\xi_t+g_t.
    \end{aligned}
\end{equation}

Some remarks about~\eqref{eq:app:DR-MLO} and~\eqref{eq:app:DR-MLO-Recursion} are in order.
First, it is clear that for each fixed $\xi_t$, $f_t$ is a convex function in $(x_{t-1},x_t)$ since it is polyhedrally representable.
Second, let $\calX_t$ and $\calY_t$ denote the projection of the feasibility set in~\eqref{eq:app:DR-MLO} onto the $x_t$-space and $y_t$-space, respectively.
Then under the assumption that $\calY_t$ is compact, we see that $f_t$ is lower semicontinuous, which by Lemma~\ref{prop:RecursionConvexity} ensures that the expectation $\bbE_{\xi_t\sim p_t}Q_t(x_{t-1};\xi_t)$ is well-defined for any Borel probability measure $p_t$.
Third, assuming $\calX_t$ is also compact and nonempty, then the simultaneous minimization of $x_t$ and $y_t$ in~\eqref{eq:app:DR-MLO-Recursion} is also well-defined.
Fourth, the nonnegativity assumption on $f_t$ is often satisfied by many DR-MLO applications, e.g., see the multi-commodity inventory problems and the hydro-thermal power planning problems in Section~\ref{sec:Numerical}.

Next we derive explicitly the bounded Lagrangian dual~\eqref{eq:RecursionLagrangianBoundedDual} for a fixed $\xi_t=\hat{\xi}_{t}$, the solution of which is the most critical step in the implementation of SSSO in Algorithm~\ref{alg:SubproblemOracle} (lines 2-3).
Suppose the under-approximation $\ulcQ_t(x_t)$ is given by
\begin{equation}\label{eq:app:DR-MLO-UnderApprox}
    \ulcQ_t(x_t)=\max_{j=1,\dots,i}\ (\hat{\kappa}_t^{j})^\transpose x_t+\hat{\upsilon}_t^j,
\end{equation}
for some vectors $\hat{\kappa}_t^1,\dots,\hat{\kappa}_t^i$ and numbers $\hat{\upsilon}_t^1,\dots,\hat{\upsilon}_t^i$.
Moreover, suppose the norm $\nVert{x_t}$ on $\bbR^{d_t}$ is polyhedrally representable, i.e., there exist a matrix $N\in\bbR^{e_t\times d_t}$ such that $\nVert{x_t}=\max_{j=1,\dots,e_t}(Nx_t)_j$.
Then the dual norm ball of radius $M_t$ in~\eqref{eq:RecursionLagrangianBoundedDual} can be written as
\[
    \nVert{\lambda_t}_*\le M_t\iff\exists\,w\in\bbR^{e_t}_{\ge0},\;\mathrm{s.t.}\ \sum_{j=1}^{e_t}w_j\le M_t,\ \sum_{j=1}^{e_t}w_jN_j=\lambda_t,
\]
where $N_j$ stands for the $j$-th row of $N$.
Using linear optimization strong duality for the inner minimization, the dual problem~\eqref{eq:RecursionLagrangianBoundedDual} at a given state $x_{t-1}=\hat{x}_{t-1}$ can be written as
\begin{equation}\label{eq:app:DualLinearRecursion}
    \begin{aligned}
        &&\max_{\nVert{\lambda_t}_*\le M_t}\quad\min_{x_t,y_t,z_t,\theta_t}\quad & c_t^\transpose y_t+\lambda_t^\transpose (\hat{x}_{t-1}-z_t)+\theta_t\\
        &&\mathrm{s.t.}\quad & \calA_t(\hat{\xi}_t)z_t+B_tx_t+E_ty_t\ge F_t\hat{\xi}_t+g_t,\\
        && & \theta_t\ge (\hat{\kappa}_t^j)^\transpose x_t+\hat{\upsilon}_t^j,\quad j=1,\dots,i,\\
        =&&\max_{\lambda_t,w_t,\zeta_t,\eta_{t,1},\dots,\eta_{t,i}}\quad & (F_t\hat{\xi}_t+g_t)^\transpose \zeta_t+\sum_{j=1}^{i}\hat{\upsilon}_t^j\eta_{t,j}\\
        && \mathrm{s.t.}\quad & B_t^\transpose \zeta_t+\sum_{j=1}^{i}\eta_{t,j}\hat{\kappa}_t^j=0,\\
        && & E_t^\transpose\zeta_t-c_t=0,\\
        && & \calA_t(\hat{\xi}_t)^\transpose\zeta_t+\lambda_t=0,\\
        && & \sum_{j=1}^{i}\eta_{t,j}=1,
        \sum_{j=1}^{e_t}w_j\le M_t,\ \sum_{j=1}^{e_t}w_jN_j=\lambda_t,\\
        && & w_t\ge0,\zeta_t\ge0,\eta_{t,1},\dots,\eta_{t,i}\ge0.
    \end{aligned}
\end{equation}
The dual problem~\eqref{eq:app:DualLinearRecursion} is again a linear optimization problem, which allows efficient solution through linear optimization solvers as subroutines.

We remark that a potential benefit of considering DR-MLO is that it is possible to implement SSSO with $M_t=+\infty$ (i.e., the usual unbounded Lagrangian dual problems) while ensuring convergence of CDDP and NDDP in finite time.
The argument has been used for showing convergence of dual dynamic programming algorithms on multistage stochastic linear optimization problems~\cite{philpott_convergence_2008,shapiro_analysis_2011}, multistage robust linear optimization problem~\cite{georghiou_robust_2019}, and DR-MLO problems~\cite{duque2019distributionally}.
To the best of our knowledge, it remains open whether such implementation still enjoys complexity upper bounds that are polynomial in $T$.

\end{document}